\newcommand{\mfrac}[1][2]{\frac{1}{2}}
\newtheorem{theorem}{Theorem}[section]
\newtheorem{lemma}{Lemma}[section]
\newtheorem{example}{Example}[section]
\newtheorem{remark}{Remark}[section]
\title{Fast time-stepping discontinuous Galerkin method for the subdiffusion equation
\thanks{This work has been supported by the Project of the National Key R\&D Program (Grants No. 2021YFA1000202), the National Natural Science Foundation of China (Grants Nos. 12120101001, 12131005, 12001326, 12171283), the Natural Science Foundation of Shandong Province (Grant Nos. 2022HWYQ-045, ZR2021ZD03, ZR2020QA032).}
}
\author{Hui Zhang${}^\dag$, Fanhai Zeng${}^\dag$, Xiaoyun Jiang\thanks{School of Mathematics, Shandong University, Jinan 250100, China
(zhangh@sdu.edu.cn, fanhai\_zeng@sdu.edu.cn, wqjxyf@sdu.edu.cn).}
, Zhimin Zhang\thanks{Wayne State University Detroit, MI 48202, USA (ag7761@wayne.edu).}
}
\begin{document}
\maketitle
\textbf{Abstract.} The nonlocality of the fractional operator causes numerical difficulties for long time computation of the time-fractional evolution equations. This paper develops a high-order fast time-stepping discontinuous Galerkin finite element method for the time-fractional diffusion equations, which saves storage and computational time. The optimal error estimate $O(N^{-p-1} + h^{m+1} + \varepsilon N^{r\alpha})$ of the current time-stepping discontinuous Galerkin method is rigorous proved, where $N$ denotes the number of time intervals, $p$ is the degree of polynomial approximation on each time subinterval, $h$ is the maximum space step, $r\ge1$, $m$ is the order of finite element space, and $\varepsilon>0$ can be arbitrarily small. Numerical simulations verify the theoretical analysis.

\textbf{Key words.} fast time-stepping discontinuous Galerkin method, optimal convergence, subdiffusion, nonlocality.

 \textbf{MSC codes.} 26A33, 65M06, 65M12, 65M15, 35R11

%





%
%
%


\section{Introduction}\label{intro}
The aim of this paper is to develop the fast time-stepping discontinuous Galerkin (DG)
finite element method (FEM) for the following subdiffusion equation
\begin{equation}\label{fpde}\left\{\begin{aligned}
&{}_{0}^{C} D_{t}^{\alpha}u(x,t) + \mathcal{L} u(x,t)=f(x,t), && (x,t)\in \Omega\times (0,T],T>0,\\
&u(x,0)=u_0(x),&& x \in \bar{\Omega},\\
&u(x,t)=0, &&(x,t)\in \partial\Omega\times [0,T],
\end{aligned}\right.\end{equation}
where $x = (x_1, x_2, ..., x_d)\in \Omega\subset \mathbb{R}^d$ is a bounded Lipschitz domain, $d \in \{1, 2, 3\}$, $\mathcal{L}$ is a linear second-order elliptic operator, and
the Caputo fractional derivative operator $_{a}^{C}D_{t}^{\alpha}$  is defined by
\begin{equation}\begin{aligned}\label{e1.133}
{}_{a}^{C}D_{t}^{\alpha}u(x,t)=\frac{1}{\Gamma(1-\alpha)}
\int_{a}^{t}\frac{\partial u(x,s)}{\partial s}(t-s)^{-\alpha}\mathrm{d}s,\quad 0<\alpha<1,\,\,a\in \mathbb{R}.
\end{aligned}\end{equation} For simplicity of analysis, we consider $\mathcal{L}=-\Delta$ in the paper.
The subdiffusion equations,
which can model the sublinear growth of mean squared particle displacement in transport processes, have attracted much interests of physicists, engineers and applied mathematicians in developing highly accurate and efficient computational methods
and numerical analysis.

The regularity of the solution of  \eqref{fpde} has been fully understood in literature \cite{JinZhou2023,LiMa2022,Stynes21}. In this paper, we  assume that the solution $u(t)=u(\cdot,t)$ of \eqref{fpde} satisfies the following finite  regularity:
\begin{equation}\label{e10004-1}\begin{aligned}
&\|u^{(\ell)}(t)\|_{H^{q}(\Omega)}\le C(1+ t^{\sigma-\ell}),\quad t>0,
\end{aligned}\end{equation}
where $C>0$, $\sigma>0$, $0\le \ell \le p+1$, $p$ and $q$ are  nonnegative integers depending the smoothness of the initial data and the source term $f(x,t)$, and $u^{(\ell)}(t)$ denotes the $\ell$-th order time derivative of $u(t)=u(\cdot,t)$; see  \cite{LiMa2022}, in which $\sigma=\alpha$ can be obtained. From \eqref{e10004-1}, one will find that there exists  the initial singularity of the solution
to  \eqref{fpde}. We employ  the adaptive mesh  to deal with
the numerical difficulty of the singularity, see, e.g., \cite{stynes2017error}.

The non-locality of the fractional derivative operator \eqref{e1.133} causes a lot of numerical difficulty for solving \eqref{fpde}, especially for long time computation and high dimension space models. Generally speaking, the approximation of the  fractional derivative ${}_{0}^{C} D_{t}^{\alpha}u(t_n)$ takes the form of (see, e.g., \cite{Alikhanov2015,SunWu06,LinXu07})
$$\sum_{j=0}^nw_{n,j}u(t_j),\quad 1 \le n\leq N,$$ where $w_{n,j}$ depends on specific time discretization methods, the fractional order $\alpha$, and the temporal mesh.

There have been a lot of numerical methods for solving \eqref{fpde}, readers can refer to  papers \cite{jin2017correction,karaa2017finite,LinXu07,Stynes21,zhu2019fast} and the books of fractional calculus \cite{Diethelm2010,JinZhou2023,li2015numerical,SunGao2020}. For the fast memory-saving time-stepping methods, readers can refer to \cite{baffet2017kernel,banjai2019efficient,chen2019accurate,guo2019efficient,jiang2017fast,lopez2008adaptive,ZengTB2018}, where the fast methods are designed to directly discretize the fractional integral or derivatives. Here, we extend the ideas of the fast methods in the aforementioned papers  to calculate the integral arising from the time-stepping DG methods. Time-stepping DG schemes have been developed to solve time-fractional diffusion and diffusion wave equations  as $u_t(t) + \partial_t^\beta(\mathcal{L}u)(t) =g(x,t),-1<\beta <1$, readers can refer to \cite{li2020analysis,li2020numerical,mustapha2015time,mustapha2012uniform} for more details.

To the best of our knowledge,
the only work of the time-stepping DG method for \eqref{fpde} was developed by
Mustapha and his collaborators in \cite{MusAFN16}, where
they proposed a linear time-stepping DG method (see \eqref{F-1} with $p=1$).
Under the regularity condition
$\|u'(t)\|_{H^2(\Omega)}+t\|u''(t)\|_{H^1(\Omega)}\le Ct^{\sigma-1},t>0,C\ge 0$, and $\sigma>\alpha/2$,
Mustapha et al.  proved the following error bound  \cite[Theorem 3]{MusAFN16}
\begin{equation}\label{eq-1}
\int_0^{t_n} \|U_h(t)-u(t)\|^2\textrm{d}t
\le C(N^{-(4-\alpha)} + h^{4}),\quad  r \ge \frac{4-\alpha}{2\sigma-\alpha},
\end{equation}
where $U_h(t)$ is the DG solution.
In this paper, we obtain the optimal error bound
\begin{equation}\label{eq-2}
\int_0^{t_n}\|U_h(t)-u(t)\|^2\textrm{d}t
\le C (N^{-2(p+1)} + h^{2m+2}),   \quad  r \ge \frac{2p+2-\alpha}{1+2\sigma-\alpha}
\end{equation}
for the high-order DG method for \eqref{fpde} under the condition \eqref{e10004-1}.
The case $p=1$ reduces to the linear DG method in \cite{MusAFN16}.
In our proof, we do not need the condition $\sigma>\alpha/2$.
The new error bound \eqref{eq-2} is optimal,
which  improves \eqref{eq-1}.
The new ingredient of our proof is that we introduce a new orthogonal projector
that helps to prove the optimal error bound in the time discretisation.

In order to reduce the storage and computational cost originated from the nonlocality of the time-fractional operator,
we develop  high-order memory-saving fast time-stepping DG method for \eqref{fpde}.
The following error bound is proved
\begin{equation}\label{eq-3}
\int_0^{t_n}\|U(t)-{}_FU(t)\|^2\textrm{d}t
\le C  (\varepsilon n^{r\alpha})^2,
\end{equation}
where ${}_FU$ is the solution of the fast time-stepping DG method (see \eqref{FDG-1}) and $\varepsilon>0$ is the error from the Gauss quadrature that can be made arbitrarily small. The  bound \eqref{eq-3} shows that the error of the fast DG method is independent
of the sizes  of the temporal  discretization error (Theorem \ref{thm3}).
To the best of  authors' knowledge, this is the first work  on the fast time-stepping DG scheme for solving the time-fractional
evolution equations as \eqref{fpde}. We also obtain the identical bound as \eqref{eq-3} for the fully discrete DG scheme, see Theorem \ref{thm4}.

This paper is organized as follows. Section \ref{sec2} presents the detailed error analysis of time-stepping DG methods for \eqref{fpde}.
Section \ref{sec3} develops the fast time-stepping DG methods and displays the detailed convergence analysis. Numerical simulations are given in Section \ref{sec4} to show the effectiveness of the present fast time-stepping DG schemes. The conclusion is given in the last section.

\section{Time-stepping DG methods}\label{sec2}
To describe the DG method, we introduce time grid points $0=t_0<t_1<\cdots<t_N=T$
and the half-open subintervals $I_n=(t_{n-1}, t_n]$ with the length $\tau_n = t_n -t_{n-1}$ for $1\le n \le N$.
We have $(0,t_n]=\cup_{k=1}^nI_k =J_n$.
In this paper, we use the following time grid points
\begin{equation}\begin{aligned}\label{TE-22}
t_n=(n/N)^r T\ \ \mathrm{for}\ \ 0\leq n \leq N,\quad r\ge 1.
\end{aligned}\end{equation}

In order to set up the   DG method, we let $\mathcal{P}_{k}(S)$ be the space of polynomials with degree
no greater than $k$ in the time variable $t$, with coefficients in the space $S$.
We introduce the trial space
\begin{equation*}
\mathcal{W}=\{v\in L^2((0,T),L^2(\Omega)): v|_{I_n}\in \mathcal{P}_p(L^2(\Omega)),\ 1\leq n \leq N\}.
\end{equation*}
For a function $v\in\mathcal{W}$, the left-hand limit and right-hand limit at $t_{n}$  can be respectively denoted by
\begin{eqnarray*}
v_{-}^{n}:= v(t_{n})=v(t_{n}^{-}) \quad\text{and}\quad v_{+}^{n}:=v(t_{n}^{+}).
\end{eqnarray*}
Throughout the paper, $\langle\cdot,\cdot\rangle$
denotes the $L^2$ inner product associated with the norm $\|\cdot\|=\|\cdot\|_{L^2(\Omega)}$,
and $\|\cdot\|_{H^k(\Omega)}$ denotes the norm on the Soblev space $H^k(\Omega)$.

For $a,\beta\in \mathbb{R}$, we define the left fractional operator ${}_a
\partial_t^{-\beta}$ as
\begin{equation}\begin{aligned}\label{RL}
{}_a\partial_t^{-\beta}u(t) = (\omega_{\beta}*u)(t)  = \int_a^t \omega_{\beta}(t-s)u(s)\textrm{d}s,\qquad
\omega_{\beta}(t)=\frac{t^{\beta-1}}{\Gamma(\beta)}.
\end{aligned}\end{equation}
For $\beta<0$,  \eqref{RL} is interpolated in terms of the principal value,
which is equivalent to the Riemann--Liouville (RL) fractional derivative operator of order $-\beta$ \cite{SamKilM93}.

We denote $\partial_t^{-\beta}={}_0\partial_t^{-\beta}$ for notational simplicity.
The following property is used in the formulation and computation of the (fast) time-stepping DG method
\begin{equation}\label{RL-C}
{}^C_aD_t^\alpha u(t)={}_a\partial_t^{\alpha} u(t)-\omega_{1-\alpha}(t-a)u(a).
\end{equation}

For notational simplicity, we introduce the notations $\lesssim $ and $ \gtrsim$.  Let  $A,B$ be the real numbers.
$A \lesssim B$ (or $A \gtrsim B$) means there exists a positive
constant $C$ independent of $\varepsilon$, the sizes of the time and/or space grids such
that $A \le CB$ (or $A \ge CB$).

\subsection{Semi-discrete time-stepping DG scheme}
According to \cite[Eq. (7)]{MusAFN16},
the semi-discrete time-stepping DG method for \eqref{fpde} is defined as: Given $U(t)$ for $ t \in [0,t_{n-1}]$, $n\ge 1$,
find $U\in\mathcal{P}_p(H_0^1(\Omega))$ on the next time subinterval $I_n$, such that
\begin{equation}\label{DG-1}
\int_{I_n} \left[\langle\partial_t^\alpha U,X \rangle+ \langle\nabla U, \nabla X \rangle\right]\textrm{d}t
=\int_{I_n} \langle f+\omega_{1-\alpha}(t)u_0,X \rangle\textrm{d}t,\,\, \forall X \in \mathcal{P}_p(H_0^1(\Omega)).
\end{equation}

The convergence of \eqref{DG-1} is presented in the following theorem.
\begin{theorem}
\label{thm1}
Let $u(t)\in H^1_0(\Omega)$ be the solution of \eqref{fpde} satisfying the regularity property \eqref{e10004-1} with $q=1$, $u_0,f\in L^2(\Omega)$. Let $U$ be the semi-discrete DG solution defined by \eqref{DG-1}.
Then
\begin{equation} \label{eq:error-a}
\int_0^{t_n}  \|U(\cdot,t)-u(\cdot,t)\|^2
{\lesssim \mathcal{E}(n,N,\sigma,\alpha,r,p),}
\end{equation}
where
\begin{equation}\label{En-1}
\mathcal{E}(n,N,\sigma,\alpha,r,p)=:\left\{\begin{aligned}
&N^{-r(1+2\sigma-\alpha)-\alpha},&&1\le r< \frac{2p+2-\alpha}{1+2\sigma-\alpha},\\
&N^{-2p-2}(1+\ln(n)),&&r= \frac{2p+2-\alpha}{1+2\sigma-\alpha},\\
&N^{-2p-2}t_n^{1+2\sigma-\alpha-\frac{2p+2-\alpha}{r}},&&r> \frac{2p+2-\alpha}{1+2\sigma-\alpha}.
\end{aligned}\right.
\end{equation}
\end{theorem}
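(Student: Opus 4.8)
The plan is to follow the classical energy argument for discontinuous Galerkin time-stepping, but with the projection tailored to the fractional structure so that no order is lost. First I would introduce the new orthogonal projector $\Pi u \in \mathcal{P}_p(H_0^1(\Omega))$ advertised in the introduction, defined piecewise on each $I_n$ by the nodal matching condition $(\Pi u)_-^n = u(t_n)$ together with the orthogonality $\int_{I_n}\langle \Pi u - u, X\rangle\,dt = 0$ for all $X \in \mathcal{P}_{p-1}(L^2(\Omega))$ (possibly in a kernel-weighted inner product adapted to $\omega_{1-\alpha}$). I would then split the error as $u - U = \eta - \vartheta$ with $\eta := u - \Pi u$ and $\vartheta := U - \Pi u \in \mathcal{P}_p(H_0^1(\Omega))$, reducing the theorem to separate estimates for $\eta$ (pure approximation) and $\vartheta$ (the computable discrete error).

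Next I would derive the error equation. Since the exact solution satisfies \eqref{DG-1} by \eqref{RL-C} and the definition of the scheme, subtracting gives the Galerkin orthogonality $\int_{I_n}[\langle \partial_t^\alpha(U-u),X\rangle + \langle \nabla(U-u),\nabla X\rangle]\,dt = 0$ for every $X \in \mathcal{P}_p(H_0^1(\Omega))$. Rewriting this in terms of $\vartheta$ and $\eta$ produces an identity in which the troublesome contributions of $\eta$ are annihilated by the orthogonality built into $\Pi$, leaving a right-hand side that couples to $\eta$ only through the nonlocal operator $\partial_t^\alpha$.

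Then comes the stability step, which I expect to be the heart of the matter. I would test the error equation with $X = \vartheta$, sum over $1 \le k \le n$, and invoke the positivity of the fractional operator on the discontinuous polynomial space, of the form $\int_0^{t_n}\langle \partial_t^\alpha \vartheta, \vartheta\rangle\,dt \ge 0$ (indeed $\gtrsim \|\partial_t^{-\alpha/2}\vartheta\|^2$), together with $\int_0^{t_n}\|\nabla\vartheta\|^2\,dt \ge 0$. Because inverting $\partial_t^\alpha$ acts like fractional integration of order $\alpha$ with kernel $\omega_\alpha$, the resulting bound $\int_0^{t_n}\|\vartheta\|^2\,dt \lesssim \mathcal{R}(\eta)$ carries $\alpha$-dependent convolution weights; tracking these sharply, so as to avoid the half-order loss that made the earlier bound \eqref{eq-1} suboptimal, is precisely what the new projector is designed to permit, and is the main obstacle.

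Finally I would estimate the approximation error and perform the graded-mesh summation. Standard polynomial approximation on $I_n$ combined with the regularity \eqref{e10004-1} at order $\ell = p+1$ yields the local bound $\int_{I_n}\|\eta\|^2\,dt \lesssim \tau_n^{2p+3}(1+t_{n-1}^{\sigma-p-1})^2$, with the first interval $I_1$ handled separately through the direct estimate $\int_{I_1}\|\eta\|^2\,dt \lesssim \int_0^{t_1}(1+t^\sigma)^2\,dt$ to absorb the singularity at $t=0$. Inserting $\tau_n \simeq N^{-r}n^{r-1}$ and $t_n \simeq (n/N)^r T$ and summing reduces everything to a series $\sum_{n=1}^N n^{\gamma}$ whose exponent $\gamma = \gamma(r,\sigma,\alpha,p)$ satisfies $\gamma+1<0$, $\gamma+1=0$, or $\gamma+1>0$ exactly according to whether $r < r^\ast$, $r = r^\ast$, or $r > r^\ast$ with $r^\ast = \frac{2p+2-\alpha}{1+2\sigma-\alpha}$. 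These three regimes produce the convergent, borderline logarithmic, and tail-dominated bounds recorded in \eqref{En-1}, and a concluding triangle inequality combining the $\eta$ and $\vartheta$ estimates completes the proof.
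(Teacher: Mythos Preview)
Your plan has the right skeleton (split via a projector, derive Galerkin orthogonality, test with the discrete error, sum over graded mesh), but the projector you describe is not the one that makes the argument close, and this causes the subsequent steps to misfire.

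The projector actually used is \emph{not} the standard DG interpolant with nodal matching and local $L^2$-orthogonality to $\mathcal{P}_{p-1}$. It is the global operator $R_t^{\alpha}$ defined by
\[
\int_0^{t_n}\partial_t^{\alpha}(R_t^{\alpha}u-u)\,X\,dt=0\quad\text{for all piecewise }X\in P_p,
\]
i.e.\ orthogonality \emph{through the fractional operator}. Consequently the error equation (with $\theta=U-R_t^{\alpha}u$, $\eta=u-R_t^{\alpha}u$) reads
\[
A_\alpha^n(\theta,X)+A_0^n(\nabla\theta,\nabla X)=A_0^n(\nabla\eta,\nabla X),
\]
so it is the $A_\alpha^n$ term that is annihilated, and the residual couples to $\eta$ only through the \emph{gradient} term --- precisely the opposite of what you wrote. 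Testing with $X=\theta$, absorbing $A_0^n(\nabla\eta,\nabla\theta)$ by Young, and using the coercivity $A_\alpha^n(\theta,\theta)\gtrsim t_n^{-\alpha}\int_0^{t_n}\|\theta\|^2$ then reduce everything to the single estimate $\int_0^{t_n}\|\nabla\eta\|^2\,dt\lesssim \mathcal{E}$.

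The second gap is in your last paragraph. Your direct interpolation bound $\int_{I_n}\|\eta\|^2\,dt\lesssim \tau_n^{2p+3}t_{n-1}^{2\sigma-2p-2}$ contains no $\alpha$, and the graded-mesh summation of this quantity gives the threshold $r^\ast=(2p+2)/(1+2\sigma)$, not $(2p+2-\alpha)/(1+2\sigma-\alpha)$; so it cannot reproduce the three cases of~\eqref{En-1}. The $\alpha$-dependence in~\eqref{En-1} enters because bounding $\|R_t^{\alpha}v-v\|_{L^2(J_n)}$ (Lemma~\ref{lem-8}) requires first controlling the Lagrange interpolation error $\eta_v=\Pi^p v - v$ in the fractional seminorm $|\eta_v|_{J_L^{\alpha/2}(J_n)}$ (this is where the $\alpha$-weighted summation producing the correct $r^\ast$ happens), and then running a duality argument against ${}_t\partial_{t_n}^{\alpha}w=e$ to trade $|e|_{J_L^{\alpha/2}}$ back to $\|e\|_{L^2}$ at the cost of a factor $N^{-\alpha/2}$. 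None of this machinery is visible in your outline, and without it the stated bound does not follow.
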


\subsection{Fully discrete  time-stepping DG scheme}
Let $S_h\subset H^1_0(\Omega)$ denote the space of continuous,
piecewise polynomials of degree no greater than $m$
with respect to a quasi-uniform partition of $\Omega$ into conforming triangular finite elements,
with maximum diameter $h$. The DG FEM space is defined as
$$\mathcal{W}_h=\{w\in L^2((0,T), S_h): w|_{I_{n}}\in \mathcal{P}_{p}(S_{h}) \ \textrm{for}\ 1\leq n\leq N\}.$$

The DG FEM  for \eqref{fpde} reads as:
Given $U_h(t)$ for $0\leq t \leq t_{n-1}$, find $U_h\in \mathcal{P}_{p}(S_{h})$ on $I_n$, such that
\begin{equation}\label{F-1}
\int_{I_n}[\langle \partial_t^\alpha U_h(t), X\rangle+ \langle \nabla U_h, \nabla X  \rangle]\textrm{d}t=
\int_{I_n} \langle f+\omega_{1-\alpha}(t) u_0,X \rangle\textrm{d}t,\forall X \in \mathcal{W}_h.
\end{equation}

We have the following theorem.
\begin{theorem}\label{thm-2}
Let $u(t)\in H_0^1(\Omega)$ be the solution of  \eqref{fpde}  satisfying   \eqref{e10004-1} with $q=m+1$, $u_0,f \in L^2(\Omega)$.
Let $U_h$ be the solution defined by \eqref{F-1}. Then, we have
\begin{equation}\label{error-1}
\int_0^{t_n}\|U_h(t)-u(t)\|^2\textrm{d}t \lesssim \mathcal{E}(n,N,\sigma,\alpha,r,p)+h^{2m+2}.
\end{equation}
\end{theorem}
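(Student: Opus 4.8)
The plan is to reduce the fully discrete estimate to the already-established temporal estimate of Theorem \ref{thm1} plus a purely spatial contribution, by comparing $U_h$ with the semidiscrete solution $U$ of \eqref{DG-1} rather than directly with $u$. Writing $U_h-u=(U_h-U)+(U-u)$ and using the triangle inequality together with Theorem \ref{thm1}, which already gives $\int_0^{t_n}\|U-u\|^2\,\textrm{d}t\lesssim\mathcal{E}(n,N,\sigma,\alpha,r,p)$, it suffices to prove the spatial bound $\int_0^{t_n}\|U_h-U\|^2\,\textrm{d}t\lesssim h^{2m+2}$. The advantage of comparing against $U$ is twofold: the two schemes \eqref{DG-1} and \eqref{F-1} share the same bilinear form, and, crucially, the Ritz projection of $U$ lands back in the discrete space $\mathcal{W}_h$ (since $U\in\mathcal{P}_p(H_0^1(\Omega))$ is piecewise polynomial in time), which would fail if we projected $u$ directly.

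The key step is the error equation. Since \eqref{DG-1} holds for all $X\in\mathcal{P}_p(H_0^1(\Omega))$ and in particular for all $X\in\mathcal{W}_h\subset\mathcal{P}_p(H_0^1(\Omega))$, subtracting \eqref{F-1} from \eqref{DG-1} yields the Galerkin orthogonality
\begin{equation*}
\int_{I_n}\big[\langle\partial_t^\alpha(U_h-U),X\rangle+\langle\nabla(U_h-U),\nabla X\rangle\big]\,\textrm{d}t=0,\qquad\forall X\in\mathcal{W}_h.
\end{equation*}
I would then introduce the elliptic (Ritz) projection $R_h:H_0^1(\Omega)\to S_h$ defined by $\langle\nabla(R_hv-v),\nabla\chi\rangle=0$ for all $\chi\in S_h$, with the standard approximation property $\|R_hv-v\|\lesssim h^{m+1}\|v\|_{H^{m+1}(\Omega)}$, and split $U_h-U=\vartheta+\rho$ with $\vartheta=U_h-R_hU\in\mathcal{W}_h$ and $\rho=R_hU-U$. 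Because $\langle\nabla\rho,\nabla X\rangle=0$ by the defining property of $R_h$, the error equation collapses to
\begin{equation*}
\int_{I_n}\big[\langle\partial_t^\alpha\vartheta,X\rangle+\langle\nabla\vartheta,\nabla X\rangle\big]\,\textrm{d}t=-\int_{I_n}\langle\partial_t^\alpha\rho,X\rangle\,\textrm{d}t,\qquad\forall X\in\mathcal{W}_h.
\end{equation*}

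To bound $\vartheta$ I would reuse the stability estimate established in the proof of Theorem \ref{thm1}, which controls $\int_0^{t_n}\|\cdot\|^2$ of a discrete DG function in terms of the forcing on the right-hand side of its defining equation; here the forcing is the spatial quantity $-\partial_t^\alpha\rho$. Via a fractional integration by parts (splitting $\partial_t^\alpha=\partial_t^{\alpha/2}\partial_t^{\alpha/2}$ and moving one half-order onto $\vartheta$, then using Young's inequality to absorb the $\vartheta$-term into the coercive part), or equivalently by feeding the forcing through the stability estimate in the appropriate fractional norm, the \emph{full} fractional derivative of $\rho$ is avoided. Since $R_h$ is time independent it commutes with $\partial_t^\alpha$, so $\partial_t^{\alpha/2}\rho=(R_h-I)\partial_t^{\alpha/2}U$, and the problem reduces to $\int_0^{t_n}\|\rho\|^2\,\textrm{d}t$ up to at most a half-order time derivative. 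Estimating this by the approximation property of $R_h$ and the spatial regularity $\|U(t)\|_{H^{m+1}(\Omega)}\lesssim 1+t^\sigma$ inherited from \eqref{e10004-1} with $q=m+1$ gives $\int_0^{t_n}\|\rho\|^2\,\textrm{d}t\lesssim h^{2m+2}$ and hence $\int_0^{t_n}\|\vartheta\|^2\,\textrm{d}t\lesssim h^{2m+2}$. Combining the two pieces yields $\int_0^{t_n}\|U_h-U\|^2\,\textrm{d}t\lesssim h^{2m+2}$, and with the triangle inequality and Theorem \ref{thm1} this gives \eqref{error-1}.

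The hard part will be the right-hand side term $\int_{I_n}\langle\partial_t^\alpha\rho,X\rangle\,\textrm{d}t$: unlike the classical parabolic case where one meets $\langle\rho_t,X\rangle$, the nonlocal fractional derivative does not integrate cleanly across the interelement jumps of the DG-in-time space, so the fractional integration by parts must be arranged so as not to spoil the positivity that drives the stability of $\vartheta$, and one must verify that only a half-order (rather than full-order) fractional time derivative of the spatial error is actually required, which is what keeps the regularity demand within \eqref{e10004-1}. A secondary technical point is to establish, uniformly in the temporal mesh, the spatial regularity $\|U(t)\|_{H^{m+1}(\Omega)}\lesssim 1+t^\sigma$ of the semidiscrete solution, so that the time integrals of $\|\rho\|^2$ and $\|\partial_t^{\alpha/2}\rho\|^2$ remain bounded despite the initial singularity at $t=0$.
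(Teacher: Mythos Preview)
Your strategy of routing the estimate through the semidiscrete solution $U$ is natural, but the step you label a ``secondary technical point'' is in fact the primary obstacle. Both the bound $\int_0^{t_n}\|\rho\|^2\,\textrm{d}t\lesssim h^{2m+2}$ and, more critically, the bound on $\int_0^{t_n}\|\partial_t^{\alpha/2}\rho\|^2\,\textrm{d}t$ that your half-order splitting actually requires, rest on controlling $\|U(t)\|_{H^{m+1}(\Omega)}$ and $\|\partial_t^{\alpha/2}U(t)\|_{H^{m+1}(\Omega)}$. The scheme \eqref{DG-1} only places $U$ in $\mathcal{P}_p(H_0^1(\Omega))$; upgrading each time slice to $H^{m+1}$ would demand an elliptic regularity argument for the coupled system on every $I_n$, together with a uniform-in-$N$ estimate $\|U(t)\|_{H^{m+1}}\lesssim 1+t^{\sigma}$ on the graded mesh near $t=0$. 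None of this is developed here, and for the merely Lipschitz domain assumed in \eqref{fpde} it need not hold for $m\ge 1$.

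The paper sidesteps this entirely by comparing $U_h$ directly with $u$ through the \emph{double} projection $R_t^{\alpha}R_hu\in\mathcal{W}_h$. Setting $\theta=U_h-R_t^{\alpha}R_hu$, $\zeta=R_hu-u$, $\eta_u=R_t^{\alpha}u-u$, the two orthogonalities act in tandem: \eqref{Rtalf} kills the $A_\alpha^n(\eta_u,X)$ term and the Ritz property \eqref{F-2} kills the $A_0^n(\nabla R_t^{\alpha}\zeta,\nabla X)$ term, leaving
\[
A_\alpha^n(\theta,X)+A_0^n(\nabla\theta,\nabla X)=-A_\alpha^n(R_t^{\alpha}\zeta,X)-A_0^n(\nabla\eta_u,\nabla X).
\]
After taking $X=\theta$, the first right-hand term is bounded by $A_\alpha^n(R_t^{\alpha}\zeta,R_t^{\alpha}\zeta)\lesssim\int_0^{t_n}\|\partial_t^{\alpha/2}R_t^{\alpha}\zeta\|^2\,\textrm{d}t$; since $R_h$ commutes with the temporal operators this is $\int_0^{t_n}\|(R_h-I)\partial_t^{\alpha/2}R_t^{\alpha}u\|^2\,\textrm{d}t\lesssim h^{2m+2}$, using only the assumed regularity \eqref{e10004-1} of $u$ with $q=m+1$. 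The second term is $A_0^n(\nabla\eta_u,\nabla\eta_u)\lesssim\mathcal{E}(n,N,\sigma,\alpha,r,p)$ by Lemma~\ref{lem-8}. The choice of comparison element is precisely what lets the spatial estimate feed on the regularity of $u$ rather than of $U$; your decomposition trades this away and then has to recover it by other means.
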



\subsection{Convergence analysis}
Define $A_{\alpha}^n(\cdot,\cdot)$ as
\begin{equation} \label{Analf}
A_{\alpha}^n(u,v)=\int_{0}^{t_n} \langle \partial_t^\alpha u(t),v(t) \rangle\textrm{d}t.
\end{equation}

Define the projector $R_t^{\alpha}:J_L^{\alpha}(J_n)\rightarrow
\{v: v\in P_p(I_k),1\le k \le n\}$ as
\begin{equation} \label{Rtalf}
\int_{0}^{t_n} \partial_t^\alpha (R_t^{\alpha}u-u)(t)X(t) \textrm{d}t=0,
 \quad \forall X\in \{v: v\in P_p(I_k),1\le k \le n\},
\end{equation}where $J_L^{\alpha}(J_n)$ denotes the left factional derivative space defined in Appendix \ref{appx-a},
and $P_p(I_k)$ is the polynomial of order $p$ defined on $I_k$.
The above projection will be used in the convergence analysis of the DG schemes developed in
the previous subsections.

Some technical lemmas are given before the proofs of Theorems \ref{thm1} and \ref{thm-2}.

\subsubsection{Lemmas}
We present  some technical  lemmas in this section.

\begin{lemma}[{\cite[Theorem 4.8]{NetoJunnior20}}]\label{cor-4a}
Let $q>1/(1-\alpha)$ and $q\ge 2$. If $v\in L^q(0,T;\Omega)$, $\partial_t^{\alpha-1}v \in W^{1,2}(0,T;\Omega)$
and $\partial_t^{\alpha-1}v^2 \in W^{1,1}(0,T;\Omega)$, then
$$\partial_t^{\alpha}(v^2)(t)\le 2v(t)\partial_t^{\alpha}v(t).$$
\end{lemma}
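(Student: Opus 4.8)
The plan is to prove the pointwise inequality by exhibiting $D(t):=2v(t)\partial_t^\alpha v(t)-\partial_t^\alpha(v^2)(t)$ as a sum of manifestly nonnegative terms for smooth $v$, and then to recover the stated weak-regularity case by approximation. Since $\partial_t^\alpha={}_0\partial_t^\alpha$ is the Riemann--Liouville derivative, I would first pass to the Caputo derivative through \eqref{RL-C}, applying $\partial_t^\alpha w={}^C_0D_t^\alpha w+\omega_{1-\alpha}(t)\,w(0)$ to $w=v$ and $w=v^2$. This gives
\begin{equation*}
D(t)=\Big[2v(t)\,{}^C_0D_t^\alpha v(t)-{}^C_0D_t^\alpha(v^2)(t)\Big]+\omega_{1-\alpha}(t)\big[2v(t)v(0)-v^2(0)\big],
\end{equation*}
isolating the Caputo expression (whose sign I control by a completion-of-squares argument) from a singular term that will be absorbed at the very end.

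For smooth $v$, writing out the Caputo derivatives as in \eqref{RL} and \eqref{e1.133} and cancelling, the bracketed term equals $\tfrac{2}{\Gamma(1-\alpha)}\int_0^t (t-s)^{-\alpha}(v(t)-v(s))\,v'(s)\,\dx[s]$. Setting $g(s):=v(t)-v(s)$, so that $(v(t)-v(s))\,v'(s)=-\tfrac12\frac{\mathrm{d}}{\mathrm{d}s}g^2(s)$, I would integrate by parts against the kernel $(t-s)^{-\alpha}$. The boundary contribution at $s=t$ vanishes since $g^2(s)=O((t-s)^2)$ dominates the singularity; the boundary contribution at $s=0$ equals $\omega_{1-\alpha}(t)(v(t)-v(0))^2$; and the interior term carries the positive factor $\frac{\mathrm{d}}{\mathrm{d}s}(t-s)^{-\alpha}=\alpha(t-s)^{-\alpha-1}\ge0$. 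Combining the $s=0$ contribution with the leftover singular term through the identity $(v(t)-v(0))^2+2v(t)v(0)-v^2(0)=v^2(t)$ yields the clean representation
\begin{equation*}
D(t)=\omega_{1-\alpha}(t)\,v^2(t)+\frac{\alpha}{\Gamma(1-\alpha)}\int_0^t\frac{(v(t)-v(s))^2}{(t-s)^{\alpha+1}}\,\dx[s]\ \ge\ 0,
\end{equation*}
which proves the inequality for smooth $v$.

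The main obstacle is to legitimise these manipulations under the weak hypotheses $v\in L^q$, $\partial_t^{\alpha-1}v\in W^{1,2}$ and $\partial_t^{\alpha-1}(v^2)\in W^{1,1}$, where $v$ is not classically differentiable and the integration by parts is only formal. I would settle this by density: approximate $v$ by smooth $v_\varepsilon$, apply the representation above to each $v_\varepsilon$, and pass to the limit. The condition $q>1/(1-\alpha)$ is exactly what is needed for H\"older's inequality to make the kernel $\omega_{1-\alpha}$ act continuously on $L^q$, so that $\partial_t^{\alpha-1}v$ and $\partial_t^{\alpha-1}(v^2)$ are continuous and the asserted weak derivatives $\partial_t^\alpha v=\frac{\mathrm{d}}{\mathrm{d}t}\partial_t^{\alpha-1}v$ and $\partial_t^\alpha(v^2)=\frac{\mathrm{d}}{\mathrm{d}t}\partial_t^{\alpha-1}(v^2)$ exist; the requirements $q\ge2$ and $\partial_t^{\alpha-1}(v^2)\in W^{1,1}$ then control $v^2$ and its fractional primitive in the matching norms. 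The technical heart is therefore the norm convergence of each term as $\varepsilon\to0$ together with the stability of the nonnegative representation under this limit; granting these, the inequality holds for all admissible $v$.
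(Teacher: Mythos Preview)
The paper does not prove this lemma at all; it is quoted verbatim from \cite[Theorem~4.8]{NetoJunnior20} with no accompanying argument, so there is no ``paper's proof'' to compare against. Your proposal is a correct and self-contained proof. The pointwise identity you derive,
\[
2v(t)\partial_t^{\alpha}v(t)-\partial_t^{\alpha}(v^2)(t)
=\omega_{1-\alpha}(t)\,v^2(t)+\frac{\alpha}{\Gamma(1-\alpha)}\int_0^t\frac{(v(t)-v(s))^2}{(t-s)^{\alpha+1}}\,\mathrm{d}s,
\]
is the classical representation underlying this inequality (it appears, in the Caputo setting, already in Alikhanov's work), and your reduction to it via \eqref{RL-C} followed by integration by parts is clean. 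The density step is only sketched, but the roles you assign to the hypotheses are the right ones: $q>1/(1-\alpha)$ makes $\omega_{1-\alpha}\in L^{q'}_{\mathrm{loc}}$ so that $\partial_t^{\alpha-1}v$ is well defined and continuous, while the $W^{1,2}$ and $W^{1,1}$ assumptions guarantee that $\partial_t^{\alpha}v$ and $\partial_t^{\alpha}(v^2)$ exist as weak derivatives and are stable under approximation. For the purposes of this paper nothing more is needed, since Remark~\ref{lem-4a} applies the lemma only to piecewise polynomials, where your smooth-case computation already suffices.
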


\begin{remark}\label{lem-4a}
From Lemma \ref{cor-4a}, if $v\in \mathcal{W}_h$ or $\mathcal{W}$, then
$\partial_t^{\alpha}(v^2)(t)\le 2v(t)\partial_t^{\alpha}v(t)$.
\end{remark}


\begin{lemma}\label{lem-4}
Let $v\in \mathcal{W}$. Then
\begin{eqnarray}
A_{\alpha}^n(v,v)\ge \frac{1}{2} \int_{0}^{t_n}\frac{(t_n-t)^{-\alpha}}{\Gamma(1-\alpha)} \|v(t)\|^2\textrm{d}t
\ge\frac{t_n^{-\alpha}}{2\Gamma(1-\alpha)}\int_{0}^{t_n} \|v(t)\|^2\textrm{d}t.\label{FDG-err-11c2}
 \end{eqnarray}
\end{lemma}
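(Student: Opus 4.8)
The plan is to reduce the bilinear quantity $A_\alpha^n(v,v)$ to a fractional integral of the scalar function $g(t):=\|v(t)\|^2$ by means of the fractional chain-rule inequality supplied by Remark \ref{lem-4a}, and then to evaluate that fractional integral explicitly.

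First I would work pointwise in the spatial variable. Since $v\in\mathcal{W}$ restricts, for each fixed $x$, to a piecewise polynomial in $t$, its order-$(1-\alpha)$ fractional integral $\partial_t^{\alpha-1}v(x,\cdot)=\omega_{1-\alpha}*v(x,\cdot)$ is continuous and the hypotheses of Lemma \ref{cor-4a} are met; hence Remark \ref{lem-4a} gives, for a.e.\ $(x,t)$,
$$\partial_t^\alpha\big(v^2\big)(x,t)\le 2\,v(x,t)\,\partial_t^\alpha v(x,t).$$
Integrating this in $x$ over $\Omega$ and interchanging the temporal (nonlocal) operator $\partial_t^\alpha$ with the spatial integral — justified by Fubini, since $\partial_t^\alpha$ is a convolution in $t$ only — I obtain the scalar inequality
$$\partial_t^\alpha g(t)\le 2\langle\partial_t^\alpha v(t),v(t)\rangle,\qquad g(t)=\|v(t)\|^2.$$

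Next I would integrate over $(0,t_n)$ and use \eqref{Analf} to recognise the right-hand side as $2A_\alpha^n(v,v)$. For the left-hand side I exploit that $\partial_t^\alpha$ is the RL derivative $\tfrac{d}{dt}\big(\omega_{1-\alpha}*\,\cdot\,\big)=\tfrac{d}{dt}\,\partial_t^{\alpha-1}(\cdot)$; by the fundamental theorem of calculus applied to the absolutely continuous function $\partial_t^{\alpha-1}g$ (absolute continuity holds because $g$ is bounded and piecewise smooth, so its order-$(1-\alpha)$ fractional integral is H\"older continuous with integrable derivative), the integral telescopes to
$$\int_0^{t_n}\partial_t^\alpha g(t)\,\textrm{d}t=\partial_t^{\alpha-1}g(t_n)-\partial_t^{\alpha-1}g(0^+)=\int_0^{t_n}\frac{(t_n-t)^{-\alpha}}{\Gamma(1-\alpha)}\|v(t)\|^2\,\textrm{d}t,$$
where the lower boundary term vanishes by continuity of $\omega_{1-\alpha}*g$ at the origin together with the definition \eqref{RL}. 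Dividing by two yields the first inequality in \eqref{FDG-err-11c2}. The second inequality is then immediate: for $t\in(0,t_n)$ one has $t_n-t\le t_n$, hence $(t_n-t)^{-\alpha}\ge t_n^{-\alpha}$, and pulling this constant factor out of the integral gives the stated lower bound.

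The routine parts are the pointwise inequality and the final monotonicity estimate; the step deserving the most care is the telescoping of $\int_0^{t_n}\partial_t^\alpha g\,\textrm{d}t$, where one must confirm that the discontinuities of $v$ (and hence of $g$) at the grid points $t_k$ do not obstruct the fundamental theorem of calculus. This is guaranteed because order-$(1-\alpha)$ fractional integration smooths the jumps, leaving $\partial_t^{\alpha-1}g$ absolutely continuous, and because the boundary contribution at $t=0$ is exactly zero.
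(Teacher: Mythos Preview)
Your proposal is correct and follows essentially the same route as the paper: apply the pointwise fractional chain-rule inequality from Remark~\ref{lem-4a}, integrate over $\Omega$ to get $\langle\partial_t^\alpha v,v\rangle\ge\tfrac12\partial_t^\alpha\|v\|^2$, then integrate in $t$ and identify $\int_0^{t_n}\partial_t^\alpha g\,\textrm{d}t$ with $\partial_t^{\alpha-1}g(t_n)=\int_0^{t_n}\omega_{1-\alpha}(t_n-t)g(t)\,\textrm{d}t$. The paper's proof is a terse three-line display that asserts the same identities without discussing the Fubini step, the telescoping, or the vanishing boundary term; your added justifications (especially that fractional integration smooths the jumps of $g$ so the fundamental theorem of calculus applies) are exactly the details one would supply if asked to expand the paper's argument.
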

\begin{proof}
For $v\in \mathcal{W}$, by Remark \ref{lem-4a}, we have
\begin{equation*}\begin{aligned}
A_{\alpha}^n(v,v)=&\int_{0}^{t_n}\langle  \partial_t^{\alpha} v(t),v(t) \rangle \textrm{d}t
\ge \frac{1}{2}\int_{0}^{t_n}\partial_t^{\alpha} \|v(t)\|^2 \textrm{d}t \\
=& \int_{0}^{t_n}\frac{(t_n-t)^{-\alpha}}{2\Gamma(1-\alpha)} \|v(t)\|^2 \textrm{d}t
\ge \frac{t_n^{-\alpha}}{2\Gamma(1-\alpha)}\int_{0}^{t_n} \|v(t)\|^2 \textrm{d}t.
\end{aligned}\end{equation*}
The proof is complete.
\end{proof}


The following Lemma \ref{lem-8} displays the convergence of the projectors $R_t^{\alpha}$, which plays an important role in proving the optimal convergence of the DG method. The proof of Lemma \ref{lem-8} is presented Appendix \ref{appx-a}.
\begin{lemma}\label{lem-8}
Let $v(t)$ satisfy
$|v^{(\ell)}(t)| \lesssim 1+ t^{\sigma-\ell}$ for $ t>0,\sigma>0,\ell=0,1,...,p+1$.
Then
\begin{equation}\label{lem-8eq1}
\int_0^{t_n}  |R_t^{\alpha}v(t)-v(t)|^2 \textrm{d}t \lesssim \mathcal{E}(n,N,\sigma,\alpha,r,p) ,
\end{equation}
where $\mathcal{E}(n,N,\sigma,\alpha,r,p)$ is defined by \eqref{En-1}.
\end{lemma}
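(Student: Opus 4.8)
The plan is to compare $R_t^\alpha v$ with the elementwise $L^2$-projection and then exploit the coercivity of $A_\alpha^n$ from Lemma \ref{lem-4} together with the Galerkin orthogonality \eqref{Rtalf} that defines $R_t^\alpha$. Let $\Pi$ denote the $L^2(I_k)$-orthogonal projection onto $\mathcal{P}_p$ applied on each subinterval, and write
\[
R_t^\alpha v - v = \theta - \rho, \qquad \theta := R_t^\alpha v - \Pi v \in \mathcal{W}, \quad \rho := v - \Pi v .
\]
Since $\int_0^{t_n}|R_t^\alpha v - v|^2 \lesssim \int_0^{t_n}\theta^2 + \int_0^{t_n}\rho^2$, it suffices to bound the two pieces separately.

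First I would dispose of the standard projection error $\int_0^{t_n}\rho^2$. On each $I_k$ the Bramble--Hilbert lemma gives $\int_{I_k}\rho^2 \lesssim \tau_k^{2p+2}\int_{I_k}|v^{(p+1)}|^2$, and inserting the regularity bound $|v^{(p+1)}(t)|\lesssim 1+t^{\sigma-p-1}$ turns this into an explicit expression in $\tau_k$ and $t_k$. Using the graded points \eqref{TE-22}, for which $\tau_k \sim N^{-r}k^{\,r-1}$ and $t_k\sim (k/N)^r$, the elementwise bound becomes a power of $k/N$; summing over $1\le k\le n$ produces the three regimes of \eqref{En-1} according to whether the resulting exponent of $k$ is below, equal to, or above the borderline value, with the cell $I_1$ treated separately because of the endpoint singularity.

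For the piecewise-polynomial correction $\theta$ I would use the coercivity of Lemma \ref{lem-4}: since $\theta\in\mathcal{W}$,
\[
\int_0^{t_n}\theta^2 \;\lesssim\; t_n^{\alpha}\,A_\alpha^n(\theta,\theta).
\]
The key algebraic step is that $\theta$ is a piecewise polynomial, so the defining orthogonality \eqref{Rtalf} gives $A_\alpha^n(R_t^\alpha v - v,\theta)=0$; combined with $\theta = (R_t^\alpha v - v)+\rho$ this yields $A_\alpha^n(\theta,\theta)=A_\alpha^n(\rho,\theta)=\int_0^{t_n}\partial_t^\alpha\rho\,\theta\,\dx[t]$. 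A (weighted) Cauchy--Schwarz inequality then bounds $\int_0^{t_n}\theta^2$ in terms of $\int_0^{t_n}|\partial_t^\alpha\rho|^2$, so the whole estimate is reduced to controlling the fractional derivative of the elementwise projection error.

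The main obstacle is precisely this last term, because $\partial_t^\alpha$ is nonlocal: for $t\in I_k$ the value $\partial_t^\alpha\rho(t)$ couples $\rho$ over all earlier cells. I would split the defining integral into the local part $s\in I_k$, which reproduces a genuinely fractional (singular-kernel) contribution comparable to the one already analysed for $\rho$, and the history part $s\in I_j$, $j<k$, where the relevant kernel (a derivative of $\omega_{1-\alpha}(t-\cdot)$) is smooth; on each history cell the elementwise orthogonality $\rho|_{I_j}\perp\mathcal{P}_p(I_j)$ lets me subtract the degree-$p$ Taylor polynomial of that kernel and gain $p+1$ extra powers of $\tau_j$, which is what makes the nonlocal tail summable. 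Carrying out the resulting double summation over the graded mesh, while keeping track of the $t_n^\alpha$ weight, is where the $\alpha$-dependent shift of the threshold $(2p+2-\alpha)/(1+2\sigma-\alpha)$ and the $\alpha$-corrections in the exponents of \eqref{En-1} appear; this bookkeeping, rather than any single inequality, is the technical heart of the proof.
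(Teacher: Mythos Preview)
Your energy-plus-Cauchy--Schwarz route cannot reach the optimal $L^2$ rate stated in \eqref{lem-8eq1}. From Lemma~\ref{lem-4} you obtain $\|\theta\|_{L^2(J_n)}^2\lesssim t_n^{\alpha}A_\alpha^n(\theta,\theta)$, and from the orthogonality plus Cauchy--Schwarz $A_\alpha^n(\theta,\theta)=\int_0^{t_n}\partial_t^\alpha\rho\,\theta\,\mathrm{d}t\le \|\partial_t^\alpha\rho\|_{L^2(J_n)}\|\theta\|_{L^2(J_n)}$; combining gives $\|\theta\|_{L^2(J_n)}\lesssim t_n^\alpha\|\partial_t^\alpha\rho\|_{L^2(J_n)}$. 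But your local/history bookkeeping, however carefully done, will produce at best $\|\partial_t^\alpha\rho\|_{L^2(J_N)}^2\sim N^{-(2p+2-2\alpha)}$ on the graded mesh with $r$ large: applying $\partial_t^\alpha$ costs a factor $\tau_k^{-\alpha}$ on each cell, and this loss is genuine (already for smooth $v$ on a uniform mesh). The outcome is $\|\theta\|_{L^2}^2\lesssim N^{-(2p+2-2\alpha)}$, short of the claimed $N^{-(2p+2)}$ by $N^{2\alpha}$, and the borderline value of $r$ that your sum produces is $\frac{2p+2-2\alpha}{1+2\sigma-2\alpha}$, not $\frac{2p+2-\alpha}{1+2\sigma-\alpha}$ as in \eqref{En-1}. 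The coercivity in Lemma~\ref{lem-4} controls only the fractional energy norm, and a direct passage from there to $L^2$ is exactly what costs the extra powers of $N$; this is the analogue of C\'ea's lemma giving the energy-norm rate but not the $L^2$ rate.

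What closes the gap in the paper is an Aubin--Nitsche duality step, which your outline omits. First, an energy estimate in the $H^{\alpha/2}$-seminorm (using the half-order splitting $\int_0^{t_n}\theta\,\partial_t^\alpha\theta\,\mathrm{d}t=\int_0^{t_n}\partial_t^{\alpha/2}\theta\;{}_t\partial_{t_n}^{\alpha/2}\theta\,\mathrm{d}t$ and the equivalence of left and right fractional seminorms) gives $|e|_{J_L^{\alpha/2}(J_n)}^2\lesssim |\eta_v|_{J_L^{\alpha/2}(J_n)}^2\lesssim N^{\alpha}\mathcal{E}(n,N,\sigma,\alpha,r,p)$, where $\eta_v$ is the piecewise Lagrange interpolation error; your local/history analysis is essentially the technical work behind this bound. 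Second --- and this is the missing ingredient --- one introduces the adjoint problem ${}_t\partial_{t_n}^\alpha w=e$, $w(t_n)=0$, tests with $e$, uses the orthogonality \eqref{Rtalf} to replace $w$ by $w-\Pi^p w$, and invokes the approximation property $\|\Pi^p w-w\|_{J_R^{\alpha/2}(J_n)}\lesssim N^{-\alpha/2}\|w\|_{J_R^{\alpha}(J_n)}\lesssim N^{-\alpha/2}\|e\|_{L^2(J_n)}$. This recovers the missing factor $N^{-\alpha/2}$ and yields $\|e\|_{L^2(J_n)}\lesssim N^{-\alpha/2}|e|_{J_L^{\alpha/2}(J_n)}$, hence \eqref{lem-8eq1} with the correct exponents and threshold.
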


\subsubsection{Proof of Theorem \ref{thm1}}
Let $\theta=U-R_t^{\alpha} u$ and $\eta=u-R_t^{\alpha}u$.
Write \eqref{DG-1} into the compact form as
\begin{equation}\label{DG-1b}
\int_{0}^{t_n} \left[\langle\partial_t^\alpha U,X \rangle+ \langle\nabla U, \nabla X \rangle\right]\textrm{d}t
=\int_{0}^{t_n} \langle f+\omega_{1-\alpha}(t)u_0,X \rangle\textrm{d}t,\,\, \forall X \in \mathcal{W}.
\end{equation}
From \eqref{fpde}, we have
\begin{equation}\label{DG-1c}
\int_{0}^{t_n} \left[\langle\partial_t^\alpha u,X \rangle+ \langle\nabla u, \nabla X \rangle\right]\textrm{d}t
=\int_{0}^{t_n} \langle f+\omega_{1-\alpha}(t)u_0,X \rangle\textrm{d}t,\,\, \forall X \in \mathcal{W}.
\end{equation}
From \eqref{DG-1b} and  \eqref{DG-1c}, we obtain the following error equation
\begin{equation} \label{TEEE-5}
A_{\alpha}^n(\theta,X) +A_{0}^n(\nabla\theta,\nabla X)
=  A_{0}^n(\nabla\eta,\nabla X),\quad \forall X \in \mathcal{W}.
\end{equation}
\begin{proof}
Taking  $X=\theta(t)$ in \eqref{TEEE-5}  yields
\begin{equation}\label{TEE-5}\begin{aligned}
A_{\alpha}^n(\theta,\theta) +A_{0}^n(\nabla\theta,\nabla \theta)
=  A_{0}^n(\nabla\theta,\nabla \eta).
\end{aligned}\end{equation}
By  Lemma \ref{lem-8}, we have the following estimate
\begin{equation}\label{TEE-5b}\begin{aligned}
|A_{0}^n(\nabla\theta,\nabla \eta)|\le& A_{0}^n(\nabla\theta,\nabla\theta)+ CA_{0}^n(\nabla\eta,\nabla\eta)\\
\le&  A_{0}^n(\nabla\theta,\nabla\theta)+C\mathcal{E}(n,N,\sigma,\alpha,r,p).
\end{aligned}\end{equation}
Combining \eqref{TEE-5} and \eqref{TEE-5b} yields
\begin{equation}\label{TEE-6}\begin{aligned}
A_{\alpha}^n(\theta,\theta)
\lesssim \mathcal{E}(n,N,\sigma,\alpha,r,p).
\end{aligned}\end{equation}
Using Lemma \ref{lem-4} and $\|U-u\|\le\|\theta\|+\|\eta\|$  arrives
at the desired result, which completes  the proof.
\end{proof}

\subsubsection{Proof of Theorem \ref{thm-2}}
We first introduce the Ritz projection $R_h: H_0^1(\Omega)\rightarrow S_h$, which is defined by
\begin{equation}\begin{aligned}\label{F-2}
\langle \nabla (R_hv-v),\nabla X \rangle=0,\ \ \forall X\in S_h,
\end{aligned}\end{equation}
The projection error $R_hv-v$ has the well-known approximation property \cite{BrennerSR08-B},
\begin{equation}\begin{aligned}\label{F-3}
\|R_hv-v\|_{L^2(\Omega)}+h\|R_hv-v\|_{H^1(\Omega)}\leq C  h^{m+1}\|v\|_{H^{m+1}(\Omega)},\ v\in H^{m+1}(\Omega),\ m\ge0.
\end{aligned}\end{equation}
Denote $\theta=U_h-R_t^{\alpha}R_{h} u$, $\zeta=R_hu-u$,
$\eta_u = R_t^{\alpha}u-u$.
 We can obtain the following error equation of the DG FEM
\begin{equation}\begin{aligned}\label{s233-eq-1a}
&\int_{0}^{t_n}[\langle\partial_t^\alpha \theta(t), X \rangle+ \langle \nabla \theta(t), \nabla X   \rangle]\textrm{d}t\\
=&-\int_{0}^{t_n}[\langle \partial_t^\alpha R_t^{\alpha}\zeta(t), X\rangle
+ \langle \nabla \eta_u(t), \nabla X \rangle]\textrm{d}t
,\quad \forall X\in \mathcal{W}_h.
\end{aligned}\end{equation}

\begin{proof}
Taking $X=\theta$ in \eqref{s233-eq-1a} and applying \eqref{F-3} yields
\begin{equation}\begin{aligned}\label{s233-eq-1}
&A_{\alpha}^n( \theta, \theta)+ A_{0}^n( \nabla \theta, \nabla \theta )\\
=&-A_{\alpha}^n(R_t^{\alpha}\zeta, \theta)-A_{0}^n( \nabla \eta_u, \nabla  \theta)\\
\le & \frac{1}{2} \left(A_{\alpha}^n( \theta, \theta) +
A_{0}^n( \nabla \theta, \nabla  \theta)\right)+
C [A_{\alpha}^n(R_t^{\alpha}\zeta,R_t^{\alpha}\zeta)+ A_{0}^n( \nabla \eta_u, \nabla  \eta_u)].
\end{aligned}\end{equation}
By Lemma \ref{lem-8}, \eqref{F-3}, and Lemmas \ref{lemA2}--\ref{lemA1}, we obtain
\begin{equation}\begin{aligned}\label{RR233-eq-1}
A_{\alpha}^n(R_t^{\alpha}\zeta,R_t^{\alpha}\zeta)
=&\int_0^{t_n}\langle\partial_t^{\alpha}R_t^{\alpha}\zeta,R_t^{\alpha}\zeta\rangle\textrm{d}t
=\int_0^{t_n}\langle\partial_t^{\alpha/2}R_t^{\alpha}\zeta,\ _{t}\partial_{t_n}^{\alpha/2}R_t^{\alpha}\zeta\rangle\textrm{d}t\\
\lesssim & \int_0^{t_n}\left\|\partial_t^{\alpha/2}R_t^{\alpha}\zeta\right\|^2\textrm{d}t\lesssim  h^{2m+2},\\
A_{0}^n(\nabla \eta_u, \nabla  \eta_u)=&\int_0^{t_n}\|\nabla  \eta_u\|^2\textrm{d}t\lesssim \mathcal{E}(n,N,\sigma,\alpha,r,p).
\end{aligned}\end{equation}
Combining \eqref{s233-eq-1} and \eqref{RR233-eq-1}, we obtain
\begin{equation}\begin{aligned}\label{s233-eq-4}
&A_{\alpha}^n( \theta, \theta)+ A_{0}^n( \nabla \theta, \nabla \theta )
\lesssim \mathcal{E}(n,N,\sigma,\alpha,r,p)+h^{2m+2}.
\end{aligned}\end{equation}
Applying Lemma \ref{lem-4} and the  triangle inequality yields
the desired result. The proof is completed.
\end{proof}

\section{Fast time-stepping DG method}\label{sec3}
The main numerical difficulty of the DG method in the previous section is to calculate the following integral
\begin{equation}\label{s5:eq-1}
\int_{I_{n}}X(t) \partial_t^\alpha U(t)\textrm{d}t,\quad X(t)\in \mathcal{P}_p(S_h),U(t)\in \mathcal{W}.
\end{equation}
Direct calculation of \eqref{s5:eq-1} is computationally expensive.
McLean \cite{Mclean2020} presented detailed implementation of calculating \eqref{s5:eq-1}.
Next, we focus on  the fast memory-saving algorithm to calculate \eqref{s5:eq-1},
which is employed to develop the fast time-stepping DG method for \eqref{fpde}.

%
%

\subsection{Review of the  fast method for  calculating the fractional  operators}
We first  introduce the basic idea of the existing fast method for calculating $\partial_t^{-\beta}U(t)$.
For $n\ge 1$, divide $\partial_t^{-\beta} U(t_n)$ into two parts as
\begin{equation}\label{s5:eq-3}\begin{aligned}
\partial_t^{-\beta} U(t_n) =& \underbrace{\int_{t_{n-1}}^{t_{n}}\omega_{\beta}(t_n-t) U(t)\textrm{d}t}_{L^{-\beta,n}U}
+\underbrace{\int_{0}^{t_{n-1}}\omega_{\beta}(t_n-t) U(t)\textrm{d}t}_{H^{-\beta,n}U}.
\end{aligned}\end{equation}
If $U(t)$ is a polynomial or piecewise polynomial, then the  local part $L^{-\beta,n}U$ can be exactly
calculated.
For example,  $U$ is a piecewise linear function defined as
\begin{equation}\label{s5:eq-4}
U(t)|_{t\in (t_{n-1},t_n]} =  \frac{t_n-t}{t_n-t_{n-1}} U^{n-1}_{+} +  \frac{t-t_{n-1}}{t_n-t_{n-1}} U^{n}_{-}.
\end{equation}
Then
\begin{equation}\label{s5:eq-LUn}\begin{aligned}
L^{-\beta,n}U =& \int_{t_{n-1}}^{t_{n}}\omega_{\beta}(t_n-t) U(t)\textrm{d}t
=\frac{\beta\tau_n^{\beta}}{\Gamma(2+\beta)} U^{n-1}_{+}+\frac{\tau_n^{\beta}}{\Gamma(2+\beta)}U^{n}_{-}.
\end{aligned}\end{equation}

Next, we focus on developing the fast memory-saving method to calculate the history part $H^{-\beta,n}U$.

The key step of the fast method is to find a suitable kernel approximation  as \cite{HuangLLZG22}
\begin{equation}\label{ker}
\omega_{\beta}(t) = \sum_{j=1}^Q w^{(-\beta)}_je^{-\lambda_j^{(-\beta)}t} +  \widehat{\varepsilon}(t)\omega_{\beta}(t),
\quad t\in [\delta,T],
\end{equation}
where   $|\widehat{\varepsilon}(t)|\lesssim \varepsilon$ for $ t\in [\delta,T]$, $\varepsilon$ is the relative error.
Set $\widehat{\varepsilon}(t)=0$  for $t\in [0,\delta]$.

There are several approaches to yield \eqref{ker}, here we introduce a simple method based on the trapezoidal rule on the real line that yields the explicit expressions of the weights $ w^{(-\beta)}_j$ and nodes $\lambda_j^{(-\beta)}$.
The kernel $\omega_{\beta}(t)$  has the following integral form
\begin{equation}\label{eq:c18}
\omega_{\beta}(t) =\frac{\sin(\beta\pi)}{\pi}\int_{0}^{\infty}\lambda^{-\beta}e^{-t\lambda}\mathrm{d}\lambda
=\frac{\sin(\beta\pi)}{\pi}\int_{-\infty}^{\infty} e^{-te^{x}+(1-\beta)x} \mathrm{d}x,\quad \beta<1.
\end{equation}
Since $e^{-te^{x}+(1-\beta)x}$ exponentially decays to zero as $|x|\to \infty$ for $t>0$, the exponentially
convergent trapezoidal rule  \cite{Trefethen14} can be used to discretize \eqref{eq:c18}, which yields
\begin{equation}\label{eq:c19}
\begin{aligned}
\omega_{\beta}(t) =& \frac{\sin(\beta\pi)}{\pi}h \sum_{j=-\infty}^{\infty}   e^{-te^{jh}+(1-\beta)jh} +O(e^{-1/h}).
\end{aligned}
\end{equation}
Find $N_1$ and $N_2$ such that  $e^{-te^{jh}+(1-\beta)jh}\le \varepsilon t^{\beta-1}$ for $j\le -N_1 $ and $j\ge N_2$.
So that we can truncate  \eqref{eq:c19} to derive the following kernel approximation  \cite{HuangLLZG22}
\begin{equation}\label{eq:c20}
\begin{aligned}
\omega_{\beta}(t) \approx   \frac{\sin(\beta\pi)}{\pi}h \sum_{j=-N_1+1}^{N_2}   e^{-te^{jh}+(1-\beta)jh}
=\sum_{j=1}^Q w^{(-\beta)}_je^{-\lambda_j^{(-\beta)}t}, \quad t\in[\delta,T],
\end{aligned}
\end{equation}
where
$\lambda_j^{(-\beta)}=e^{-te^{(j-N_1)h}}$ and $w^{(-\beta)}_j=\frac{\sin(\beta\pi)}{\pi}h e^{(1-\beta)(j-N_1)h}.$

With \eqref{eq:c20}, we can construct the fast algorithm for calculating $H^{-\beta,n}U$.
Inserting  \eqref{eq:c20} into
$H^{-\beta,n}U=\int_{0}^{t_{n-1}}\omega_{\beta}(t_n-t) U(t)\textrm{d}t$ yields
\begin{equation}\label{s6:eq-3-2}\begin{aligned}
H^{-\beta,n}U=& \int_{0}^{t_{n-1}}\omega_{\beta}(t_n-t) U(t)\textrm{d}t
\approx \sum_{j=1}^Qw_j^{(-\beta)} \int_{0}^{t_{n-1}}e^{-\lambda_j^{(-\beta)}(t_n-t)} U(t)\textrm{d}t\\
=& \sum_{j=1}^Qw_j^{(-\beta)} e^{-\lambda_j^{(-\beta)}\tau_n}
\int_{0}^{t_{n-1}}e^{-\lambda_j^{(-\beta)}(t_{n-1}-t)} U(t)\textrm{d}t\\
=&\sum_{j=1}^Qw_j^{(-\beta)} e^{-\lambda_j^{(-\beta)}\tau_n}Y_{j}(t_{n-1})
={}_FH^{-\beta,n}U,
\end{aligned}\end{equation}
where $Y_{j}(t)=\int_{0}^{t}e^{-\lambda_j^{(-\beta)}(t-s)} U(s)\textrm{d}s$ is the solution of the following ODE
\begin{equation}\label{s6:eq-3-3}\begin{aligned}
\frac{\textrm{d}}{\textrm{d}t} Y_{j}(t) = -\lambda_j^{(-\beta)}Y_{j}(t) + U(t),\qquad Y_{j}(0) = 0,
\end{aligned}\end{equation}
which can be exactly solved by
\begin{equation}\label{s6:eq-3-4}\begin{aligned}
Y_{j}(t_{n-1}) = e^{-\lambda_j^{(-\beta)}\tau_{n-1}} Y_{j}(t_{n-2})
+ \int_{t_{n-2}}^{t_{n-1}}e^{-\lambda_j^{(-\beta)}(t_{n-1}-t)}U(t)\textrm{d}t,\quad Y_{j}(0)=0.
\end{aligned}\end{equation}

With  \eqref{s6:eq-3-2}, we present the fast method for calculating  $\partial_t^{-\beta} U(t)$  in Algorithm \ref{alg-1}.
\begin{algorithm}
\caption{Fast calculation of  $\partial_t^{-\beta} U(t)$ based on \eqref{ker} or \eqref{eq:c20}.}\label{alg-1}
\begin{itemize}
  \item Step 1. Divide $\partial_t^{-\beta} U(t_n)$ into two parts as \eqref{s5:eq-3}.
  \item Step 2. Calculate the local part $L^{-\beta,n}U$ directly.
  \item Step 3. Approximate the history part $H^{-\beta,n}U$ by   \eqref{s6:eq-3-2}, i.e.,
  \begin{equation}\label{eq:A1}
  {}_FH^{-\beta,n}U=\sum_{j=1}^{Q}w_{j}^{(-\beta)}e^{-\lambda_j\tau_n}Y_j(t_{n-1}),\quad {}_FH^{-\beta,0}U=0.
  \end{equation}
\item Step 4. Output  ${}_FD^{-\beta, n}U = L^{-\beta,n}U +{}_FH^{-\beta,n}U$.
\end{itemize}
\end{algorithm}

It has been shown that the  method \eqref{eq:c20} (or Algorithm \ref{alg-1}) works only for $\beta<1$,  performs badly when $\beta \to 1$, and cannot work for $\beta\ge 1$  \cite{HuangLLZG22}.
This drawback can be tackled by using the following property
\begin{equation}\label{ker2}
\omega_{\beta}(t)=\Big(\prod_{\ell=1}^q(\beta-\ell)\Big)^{-1} {t^q} \omega_{\beta-q}(t),\quad \beta \in \mathbb{R},
\end{equation}
which extends the approach \eqref{eq:c20} to  deal with the case of $\beta\ge 1$.
Assume that \eqref{eq:c20} works well for $\beta\le \beta_0<1$.
Find a smallest integer $q\ge 0$ such that   $\beta-q\le \beta_0$. Applying
\eqref{eq:c20} to $\omega^{(\beta-q)}(t)$ yields
\begin{equation}\label{ker3}
\omega_{\beta}(t)\approx \Big(\prod_{\ell=1}^q(\beta-\ell)\Big)^{-1} \sum_{j=1}^Q  {w_j^{(-\beta+q)}}t^qe^{-{\lambda}_j^{(-\beta+q)}t}
=: \sum_{j=1}^Q  \widehat{w}_j^{(-\beta)}t^qe^{-\widehat{\lambda}_j^{(-\beta)}t}.
\end{equation}
Inserting \eqref{ker3} into $H^{-\beta,n}U$ yields
\begin{equation}\label{s6:eq-3-2b}\begin{aligned}
H^{-\beta,n}U
\approx& \sum_{j=1}^Q\widehat{w}_j^{(-\beta)} \int_{0}^{t_{n-1}}(t_n-t)^qe^{-\widehat{\lambda}_j^{(-\beta)}(t_n-t)} U(t)\textrm{d}t\\
=&\sum_{j=1}^Q\widehat{w}_j^{(-\beta)} e^{-\widehat{\lambda}_j^{(-\beta)}\tau_n}
\sum_{k=0}^q{q\choose k}t_n^{q-k} (-1)^kY_j^{(k)}(t_{n-1})
={}_FH^{-\beta,n}U,
\end{aligned}\end{equation}
where  $Y_j^{(k)}(t_{n-1})$ satisfies
\begin{equation}\label{sec4-4-3}
Y_j^{(k)}(t_{n-1})=e^{\widehat{\lambda}_j^{(-\beta)}\tau_{n-1}}Y_j(t_{n-2})
+ \int_{t_{n-2}}^{t_{n-1}} e^{\widehat{\lambda}_j^{(-\beta)}(t_{n-1}-t)}t^kU(t)
\mathrm{d}t,\quad Y^{(k)}_j(0)=0.
\end{equation}

With \eqref{s6:eq-3-2b}, we present the improved fast Algorithm   \ref{alg-2}.
\begin{algorithm}
\caption{Fast calculation of  $\partial_t^{-\beta} U(t)$ based on \eqref{ker3}.}\label{alg-2}
Replace \eqref{eq:A1} in Algorithm   \ref{alg-1} by  \eqref{s6:eq-3-2b}
to yield the  output ${}_FH^{-\beta,n}U$.
\end{algorithm}

We refer interesting readers to \cite{HuangLLZG22}  for more kernel approximation as \eqref{ker},  where
different kernel approximations are compared, the advantages and pitfalls are displayed, and the corresponding fast
algorithms are developed and numerically verified.

Next, we simply display that Algorithms \ref{alg-1} and \ref{alg-2}
may be used to calculate \eqref{s5:eq-1}. For the linear DG method, we can take
$X\in\{t_n-t,t-t_{n-1}\}$.
Direct calculation shows that
\begin{equation}\label{s3:eq319}
\int_{t_{n-1}}^{t_n}(t_n-t)  \partial_t^{\alpha} U(t)\textrm{d}t=\partial_t^{\alpha-2}U(t_n)
- \partial_t^{\alpha-2} U(t_{n-1})-\tau_n\partial_t^{\alpha-1} U(t_{n-1}).
\end{equation}
Hence, the integral  $\int_{t_{n-1}}^{t_n}(t_n-t)  \partial_t^{\alpha} U(t)\textrm{d}t$
can be approximated by
\begin{equation}\label{s3:eq320}
\int_{t_{n-1}}^{t_n}(t_n-t)  \partial_t^{\alpha} U(t)\textrm{d}t
\approx {}_FD^{\alpha-2,n}U - {}_FD^{\alpha-2,n-1}U-\tau_n{}_FD^{\alpha-1,n-1}U,
\end{equation}
where ${}_FD^{\alpha-k,n}(k=1,2)$ can be calculated by Algorithm  \ref{alg-2}. We can also apply
Algorithm  \ref{alg-1} to obtain ${}_FD^{\alpha-1,n}$ when $\alpha$ is not close to zero.
We can similarly derive
\begin{equation}\label{s3:eq321}
\int_{t_{n-1}}^{t_n}(t-t_{n-1})\partial_t^{\alpha} U(t)\textrm{d}t \approx -{}_FD^{\alpha-2,n}U
+ {}_FD^{\alpha-2,n-1}U+\tau_n{}_FD^{\alpha-1,n}U.
\end{equation}

Numerical simulations show that  \eqref{s3:eq320} is sensitive to roundoff errors.
Therefore, we will not take  \eqref{s3:eq320} and \eqref{s3:eq321} to develop the fast time-stepping DG method in this paper.
In the next section, we  will  develop a new fast algorithm to calculate \eqref{s5:eq-1},
which works well for all $\alpha>0$ and is not sensitive to roundoff errors.

\subsection{Semi-discrete fast time-stepping DG method}
For  $t\in [t_{n-1},t_n],n\ge2$, using the following relation
$$\partial_t^{\alpha} U(t) = {}_{t_{n-2}}\partial_t^{\alpha} U(t)
+ \int_{0}^{t_{n-2}} \omega_{-\alpha}(t-s)U(s)\textrm{d}s,$$
we can obtain
\begin{equation}\label{sec42-eq-2} \begin{aligned}
D^{\alpha,n}(U,X)=\int_{t_{n-1}}^{t_n}\langle X(t),\partial_t^{\alpha} U(t)\rangle \textrm{d}t
=L^{\alpha,n} (U,X) +H^{\alpha,n}(U,X),
\end{aligned}\end{equation}
where
\begin{eqnarray}
L^{\alpha,n}(U,X)&=&\int_{t_{n-1}}^{t_n}\langle{}_{t_{n-2}}\partial_t^{\alpha}  U(t),X(t)\rangle\textrm{d}t,\label{LUX}\\
H^{\alpha,n}(U,X)
&=&\int_{t_{n-1}}^{t_n}  \int_{0}^{t_{n-2}} \omega_{-\alpha}(t-s)\langle U(s),X(t) \rangle\textrm{d}s  \textrm{d}t.\label{HUX}
\end{eqnarray}
The local part $L^{\alpha,n}(U,X)$ can be calculated exactly when $U(t)$ is a piecewise polynomial.

Next, we  develop the fast method to calculate
the history part $H^{\alpha,n}(U,X)$.

%
Similar to \eqref{s6:eq-3-2}, we let $\beta=-\alpha$ in  \eqref{ker} and insert it  into $H^{\alpha,n}(U,X)$, we obtain
\begin{equation} \label{FHUX}\begin{aligned}
H^{\alpha,n}(U,X) \approx &
 \int_{t_{n-1}}^{t_n}\int_{0}^{t_{n-2}}
\sum_{j=1}^Q w_j^{(\alpha)}e^{-\lambda_j(t-s)} \langle U(s),X(t) \rangle\textrm{d}s  \textrm{d}t\\
=& \sum_{j=1}^Q w_j^{(\alpha)}  \int_{t_{n-1}}^{t_n}e^{-\lambda_j(t-t_{n-2})}\bigg\langle\underbrace{\int_{0}^{t_{n-2}}
e^{-\lambda_j(t_{n-2}-s)}   U(s)\textrm{d}s}_{Y_{j}(t_{n-2})},X(t) \bigg\rangle\textrm{d}t\\
=& \sum_{j=1}^Q w_j^{(\alpha)} e^{-\lambda_j \tau_{n-1}}
\bigg\langle Y_{j}(t_{n-2}),\underbrace{\int_{t_{n-1}}^{t_n}X(t) e^{-\lambda_j(t-t_{n-1})}\textrm{d}t}_{\psi^n(X)}\bigg\rangle \\
=&\sum_{j=1}^Q w_j^{(\alpha)}e^{-\lambda_j \tau_{n-1}}\langle \psi^n(X), Y_{j}(t_{n-2})\rangle
={}_{F}H^{\alpha,n}(U,X),
\end{aligned}\end{equation}
where $Y_{j}(t)$ is the solution of the   ODE \eqref{s6:eq-3-3} that can be exactly solved by \eqref{s6:eq-3-4}
and $\psi^n(X)$  is given by
\begin{equation} \label{eq:psi-n}\psi^n(X)=(\tau_n)^2 \left\{\begin{aligned}
& \varphi_1(-\lambda_j\tau_n),&&X=t-t_{n-1},\\
& \varphi_0(-\lambda_j\tau_n)-\varphi_1(-\lambda_j\tau_n),&&X=t_n-t,
\end{aligned}\right.\end{equation}
with $\varphi_k(z)=  \int_{0}^{1}t^{k}e^{zt}\textrm{d}t$.

\begin{remark}
For quadratic DG, we can choose $X\in \{(t-t_{n-1})^2,(t-t_{n-1})(t_n-t),(t_n-t)^2\}$. Direct calculation shows that
\begin{equation*} \label{eq:psi-nb}\psi^n(X)=(\tau_n)^3 \left\{\begin{aligned}
& \varphi_2(-\lambda_j\tau_n),&&X=(t-t_{n-1})^2,\\
& \varphi_1(-\lambda_j\tau_n)-\varphi_2(-\lambda_j\tau_n),&&X=(t-t_{n-1})(t_n-t),\\
& \varphi_0(-\lambda_j\tau_n) - 2\varphi_1(-\lambda_j\tau_n)+\varphi_2(-\lambda_j\tau_n),&&X=(t_n-t)^2.
\end{aligned}\right.\end{equation*}
For $k=0,1,2$, we have
$\varphi_0(z)= z^{-1}(e^z-1)$, $\varphi_1(z)= z^{-2}((z-1)e^z+1)=e^z \frac{e^{-z} - [1 + (-z)]}{(-z)^2}$, and
$\varphi_2(z)= z^{-3}((z^2-2z+2)e^z-2)= 2e^z \frac{e^{-z} - [1 + (-z) + (-z)^2/2]}{(-z)^3}$.
\end{remark}

We present the fast method  for calculating  \eqref{s5:eq-1}  in Algorithm \ref{alg-3}.
\begin{algorithm}
\caption{Fast calculation of  \eqref{s5:eq-1} based on \eqref{ker} or \eqref{eq:c20}.}\label{alg-3}
\begin{itemize}
  \item Step 1. Divide $\int_{t_{n-1}}^{t_n}\langle X(t),\partial_t^{\alpha} U(t)\rangle \textrm{d}t$ into two parts as \eqref{sec42-eq-2}.
  \item Step 2. Calculate the local part $L^{\alpha,n}(U,X)$ directly.
  \item Step 3. Approximate the history part $L^{\alpha,n}(U,X)$ by   \eqref{FHUX}, i.e.,
  \begin{equation}\label{eq:A3}
{}_{F}H^{\alpha,n}(U,X)=\sum_{j=1}^Q w_j^{(\alpha)}e^{-\lambda_j \tau_{n-1}}\langle \psi^n(X), Y_{j}(t_{n-2})\rangle,\quad {}_{F}H^{\alpha,0}(U,X)=0.
  \end{equation}
\item Step 4. Output  ${}_{F}D^{\alpha,n}(U,X) = L^{\alpha,n}(U,X) +{}_{F}H^{\alpha,n}(U,X)$.
\end{itemize}
\end{algorithm}

From \eqref{sec42-eq-2} and \eqref{FHUX}, we can present the semi-discrete fast time-stepping DG method for \eqref{fpde} as:
Given ${}_FU(t)$ for $ t \in (0,t_{n-1}]$,
the discrete solution ${}_FU\in\mathcal{P}_p(\Omega)$ on the next time subinterval $I_n$ is determined by requesting that
\begin{equation}\begin{aligned}\label{FDG-1}
{}_{F}D^{\alpha,n}({}_FU,X)
+\int_{I_n} \langle\nabla {}_FU(t), \nabla X \rangle\textrm{d}t
=\int_{I_n} \langle f(t)+\omega_{1-\alpha}(t)u_0,X \rangle\textrm{d}t,\quad \forall X \in \mathcal{P}_p(\Omega),
\end{aligned}\end{equation}
where ${}_FU = U$ for $t\in (0,t_2]$, $U$ is the solution of \eqref{DG-1}, and ${}_{F}D^{\alpha,n}(\cdot,\cdot)$
is defined by Algorithm \ref{alg-3}.

We have the following error bound.
\begin{theorem} \label{thm3}
Let $U$ and ${}_FU$ be the solutions of \eqref{DG-1} and  \eqref{FDG-1}, respectively.
If $\varepsilon \le c N^{-r\alpha}$ and $c>0$ is sufficiently small, then
$$\int_0^{t_n}\|U(t)-{}_FU(t)\|^2\textrm{d}t\lesssim (\varepsilon t_n^{\alpha} t_1^{-\alpha})^2
\lesssim (\varepsilon n^{r\alpha})^2.$$
\end{theorem}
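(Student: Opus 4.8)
The plan is to derive an error equation for $e := U-{}_FU$ by subtracting the fast scheme \eqref{FDG-1} from the exact semi-discrete scheme \eqref{DG-1}, and then to run an energy argument based on the coercivity of $A_\alpha^n$ from Lemma \ref{lem-4}. Since ${}_FU=U$ on $(0,t_2]$ by definition, $e\equiv 0$ there, and $e\in\mathcal{W}$ is an admissible piecewise-polynomial test function. Both schemes compute the local part $L^{\alpha,n}$ in \eqref{LUX} exactly and differ only through the kernel approximation \eqref{ker} used for the history part \eqref{HUX} via \eqref{FHUX}. Hence, subtracting, testing with $X=e|_{I_k}$, and summing over $k=1,\dots,n$, I would obtain
$$A_\alpha^n(e,e) + A_0^n(\nabla e,\nabla e) = \sum_{k=1}^n G^k(e) - \sum_{k=1}^n E^k(e,e),$$
where $G^k(e) = {}_FH^{\alpha,k}(U,e)-H^{\alpha,k}(U,e)$ is the consistency error from approximating the kernel against the exact solution's history, and $E^k(e,e)={}_FH^{\alpha,k}(e,e)-H^{\alpha,k}(e,e)$ is the perturbation of the history operator acting on $e$ itself. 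By \eqref{ker} with $\beta=-\alpha$, both carry the explicit factor $\widehat\varepsilon(t-s)\omega_{-\alpha}(t-s)$ with $|\widehat\varepsilon|\lesssim\varepsilon$, so each is $\varepsilon$ times a cut-off history convolution with kernel $(t-s)^{-\alpha-1}$ restricted to $t-s\ge\tau_{k-1}\ge\tau_1=t_1$ (the first interval is the shortest on the graded mesh \eqref{TE-22}).

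The two essential estimates are as follows. For the perturbation term, I would bound the $L^2(0,t_n)\to L^2(0,t_n)$ operator norm of the cut-off convolution kernel by Young's convolution inequality, using $\sup_t\int_{t_1}^{t_n}u^{-\alpha-1}\textrm{d}u\le\frac1\alpha t_1^{-\alpha}$, which gives $\sum_k|E^k(e,e)|\lesssim \varepsilon\,t_1^{-\alpha}\int_0^{t_n}\|e\|^2\textrm{d}t$. For the consistency term, using the uniform stability bound $\|U(s)\|\lesssim 1$ on $[0,T]$ (which follows from Theorem \ref{thm1} together with the boundedness of $u$), the inner history integral is $\lesssim(t-t_{k-2})^{-\alpha}\lesssim\tau_{k-1}^{-\alpha}$, and Cauchy--Schwarz in time yields $|\sum_k G^k(e)|\lesssim\varepsilon\|W\|_{L^2(0,t_n)}\|e\|_{L^2(0,t_n)}$, where $W$ collects the history integral and satisfies $\|W\|_{L^2(0,t_n)}^2\lesssim\sum_k\tau_k\tau_{k-1}^{-2\alpha}\lesssim t_1^{-2\alpha}$ on the mesh \eqref{TE-22}.

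With these in hand I would close the argument. By Lemma \ref{lem-4}, $A_\alpha^n(e,e)\ge\frac{t_n^{-\alpha}}{2\Gamma(1-\alpha)}\int_0^{t_n}\|e\|^2\textrm{d}t$ and $A_0^n(\nabla e,\nabla e)\ge0$, so the perturbation term is absorbed into the left-hand side provided $\varepsilon\,t_1^{-\alpha}\lesssim t_n^{-\alpha}$, i.e.\ $\varepsilon\lesssim t_1^{\alpha}t_n^{-\alpha}=n^{-r\alpha}$; since $n\le N$, the hypothesis $\varepsilon\le cN^{-r\alpha}$ with $c$ small is the strongest such condition and suffices. What remains is $t_n^{-\alpha}\int_0^{t_n}\|e\|^2\textrm{d}t\lesssim\varepsilon\,t_1^{-\alpha}\|e\|_{L^2(0,t_n)}$, and dividing gives $\|e\|_{L^2(0,t_n)}\lesssim\varepsilon\,t_n^{\alpha}t_1^{-\alpha}$; squaring yields the claim, together with the identity $t_n^{\alpha}t_1^{-\alpha}=n^{r\alpha}$ for the mesh \eqref{TE-22}.

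The hard part will be the pair of convolution estimates for $G^k$ and $E^k$: one must track the exact $t_1^{-\alpha}$ scaling of the cut-off history kernel (which reflects the smallest temporal gap $\tau_1=t_1$ entering the memory term), and verify that the resulting perturbation can indeed be absorbed by the coercive $A_\alpha^n$-term, whose size scales like $t_n^{-\alpha}$. This comparison of scales is precisely what forces the smallness condition $\varepsilon\le cN^{-r\alpha}$ and determines the final factor $(\varepsilon\,t_n^{\alpha}t_1^{-\alpha})^2=(\varepsilon n^{r\alpha})^2$.
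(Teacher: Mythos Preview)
Your strategy matches the paper's almost exactly: derive the error equation for $e=U-{}_FU$, split the right-hand side into a consistency term (against $U$) and a perturbation term (against $e$), control each by a convolution estimate that produces the factor $t_1^{-\alpha}$, and then absorb the perturbation into the coercive lower bound $A_\alpha^n(e,e)\ge \frac{t_n^{-\alpha}}{2\Gamma(1-\alpha)}\|e\|^2_{L^2(0,t_n)}$ from Lemma~\ref{lem-4}. The smallness condition and final scaling emerge just as you describe.

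There is one small gap. You invoke a \emph{pointwise} bound $\|U(s)\|\lesssim 1$ and say it follows from Theorem~\ref{thm1}; but Theorem~\ref{thm1} only controls $\int_0^{t_n}\|U-u\|^2\,\mathrm{d}t$, which gives $\|U\|_{L^2(0,t_n)}\lesssim 1$, not an $L^\infty$-in-time bound (an inverse inequality on $I_k$ would cost a factor $\tau_k^{-1/2}$). The paper sidesteps this by treating the consistency term $\mathcal E_2$ the same way as the perturbation $\mathcal E_1$: after Cauchy--Schwarz, it applies Young's convolution inequality \eqref{Young-conv-eq} with kernel $\omega_{-\alpha}(\cdot+\delta)$ and $\delta=t_1$, obtaining
\[
\Big(\int_{t_2}^{t_n}\Big(\int_0^{t-\delta}|\omega_{-\alpha}(t-s)|\,\|U(s)\|\,\mathrm{d}s\Big)^2\mathrm{d}t\Big)^{1/2}
\le \frac{\delta^{-\alpha}}{\Gamma(1-\alpha)}\,\|U\|_{L^2(0,t_n)},
\]
so only the $L^2$ norm of $U$ appears. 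This also avoids your mesh-dependent sum $\sum_k\tau_k\tau_{k-1}^{-2\alpha}\lesssim t_1^{-2\alpha}$, which is true on the graded mesh \eqref{TE-22} but needs its own verification. Replacing your pointwise step by the same Young argument you already use for $E^k$ closes the gap and makes the two proofs essentially identical.
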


\subsection{Fully discrete fast time-stepping DG method}
From \eqref{FDG-1}, we can present the fully discrete fast time-stepping DG method for \eqref{fpde} as:
Given ${}_FU_h(t)$ for $ t \in [0,t_{n-1}],n\ge2$,
the discrete solution ${}_FU_h\in \mathcal{P}_{p}(S_{h})$ on the  time subinterval $I_n$ is determined by requesting that
\begin{equation}\begin{aligned}\label{FDG-2}
{}_{F}D^{\alpha,n}({}_FU_h,X)+\int_{I_n} \langle\nabla {}_FU_h(t), \nabla X \rangle\textrm{d}t
= \int_{I_n} \langle f+\omega_{1-\alpha}(t)u_0,X \rangle\textrm{d}t,\quad \forall X \in \mathcal{W}_h,
\end{aligned}\end{equation}
where ${}_FU_h = U_h$ for $t\in [0,t_2]$, $U_h$ is the solution of \eqref{F-1}, and ${}_{F}D^{\alpha,n}(\cdot,\cdot)$
is defined by Algorithm \ref{alg-3}.

We have the following error bound.
\begin{theorem}\label{thm4}
Let $U_h$ and ${}_FU_h$ be the solutions of \eqref{F-1} and  \eqref{FDG-2}, respectively.
If $\varepsilon \le c N^{-r\alpha}$ and $c>0$ is sufficiently small, then
$$\int_0^{t_n}\|U_h(t)-{}_FU_h(t)\|^2\textrm{d}t\lesssim  (\varepsilon t_n^{\alpha} t_1^{-\alpha})^2
\lesssim (\varepsilon n^{r\alpha})^2.$$
\end{theorem}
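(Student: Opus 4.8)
The plan is to mirror the argument behind Theorem \ref{thm3}, the only change being that the test and trial spaces are now $\mathcal{W}_h$ rather than $\mathcal{P}_p(\Omega)$; since the spatial (gradient) term will enter only through its non-negativity, the finite-element discretisation does not affect the structure of the estimate. Set $e=U_h-{}_FU_h\in\mathcal{W}_h$ and recall $e\equiv 0$ on $(0,t_2]$. Subtracting \eqref{FDG-2} from \eqref{F-1} and inserting the splitting \eqref{sec42-eq-2} of $D^{\alpha,n}$ together with the linearity of $L^{\alpha,n}$ and $H^{\alpha,n}$ in their first argument, I would write, for every $X\in\mathcal{W}_h$ and each $n\ge 3$,
\[
D^{\alpha,n}(e,X)+\int_{I_n}\langle\nabla e,\nabla X\rangle\,\textrm{d}t=-R^n(X),\qquad R^n(X):=H^{\alpha,n}({}_FU_h,X)-{}_FH^{\alpha,n}({}_FU_h,X),
\]
the identity being $D^{\alpha,n}(U_h,X)-{}_FD^{\alpha,n}({}_FU_h,X)=D^{\alpha,n}(e,X)+R^n(X)$, so that $R^n$ collects exactly the kernel-approximation error. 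Taking $X=e$, summing over the time levels (only $n\ge 3$ contribute), and recognising $\sum_k D^{\alpha,k}(e,e)=A_\alpha^n(e,e)$ and $\sum_k\int_{I_k}\|\nabla e\|^2=A_0^n(\nabla e,\nabla e)\ge 0$, I would drop the non-negative gradient term and apply the coercivity Lemma \ref{lem-4} to get, with $E^2:=\int_0^{t_n}\|e\|^2\,\textrm{d}t$,
\[
\frac{t_n^{-\alpha}}{2\Gamma(1-\alpha)}E^2\le A_\alpha^n(e,e)\le\Big|\sum_{k=3}^n R^k(e)\Big|.
\]

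For the quadrature error I would substitute the kernel bound \eqref{ker} with $\beta=-\alpha$ into \eqref{HUX}--\eqref{FHUX}, giving
\[
\sum_{k=3}^n R^k(e)=\int\!\!\int_{\Omega_n}\widehat\varepsilon(t-s)\,\omega_{-\alpha}(t-s)\,\langle{}_FU_h(s),e(t)\rangle\,\textrm{d}s\,\textrm{d}t
\]
over the history region $\Omega_n=\{(s,t):t\in I_k,\ 0\le s\le t_{k-2},\ 3\le k\le n\}$. The crucial geometric fact is that on $\Omega_n$ one has $t-s\ge\tau_{k-1}\ge\tau_1=t_1$, because the graded steps $\tau_k$ increase for $r\ge1$; hence \eqref{ker} is valid there and $|\widehat\varepsilon(t-s)|\lesssim\varepsilon$. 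Since $t-s$ stays away from the origin, the kernel is integrable and a direct computation yields $\int_0^{t_{k-2}}|\omega_{-\alpha}(t-s)|\,\textrm{d}s\lesssim\tau_{k-1}^{-\alpha}\le t_1^{-\alpha}$ and, for fixed $s$, $\int|\omega_{-\alpha}(t-s)|\,\textrm{d}t\lesssim t_1^{-\alpha}$; this is the source of the factor $t_1^{-\alpha}$. Splitting $\|{}_FU_h(s)\|\le\|U_h(s)\|+\|e(s)\|$ and applying Cauchy--Schwarz in each variable then gives
\[
\Big|\sum_{k=3}^n R^k(e)\Big|\lesssim\varepsilon\,t_1^{-\alpha}\big(B\,E+E^2\big),\qquad B^2:=\int_0^{t_n}\|U_h\|^2\,\textrm{d}t,
\]
where $B\lesssim 1$ follows from the regularity \eqref{e10004-1} and the convergence estimate of Theorem \ref{thm-2}, which bound $U_h$ in $L^2(0,t_n;L^2(\Omega))$.

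Finally I would close by absorption. Since $\varepsilon\le cN^{-r\alpha}$ gives $\varepsilon\,t_1^{-\alpha}\lesssim c\,T^{-\alpha}\le c\,t_n^{-\alpha}$, for $c$ small enough the self-interaction term $\varepsilon t_1^{-\alpha}E^2$ is absorbed into the coercive left-hand side $t_n^{-\alpha}E^2$, leaving $t_n^{-\alpha}E^2\lesssim\varepsilon t_1^{-\alpha}BE$, i.e. $E\lesssim\varepsilon\,t_1^{-\alpha}t_n^{\alpha}B\lesssim\varepsilon\,t_n^{\alpha}t_1^{-\alpha}$; squaring and using $t_n^{\alpha}t_1^{-\alpha}=n^{r\alpha}$ (from $t_n=(n/N)^rT$) yields the claim. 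The main obstacle is the middle step: controlling $\sum_k R^k(e)$ uniformly, which hinges on the two facts that the history kernel is evaluated only where $t-s\ge t_1$ (so that simultaneously $|\widehat\varepsilon|\lesssim\varepsilon$ and the kernel has $L^1$-mass $\lesssim t_1^{-\alpha}$) and that the resulting $E^2$ contribution is small enough to be absorbed precisely under the hypothesis $\varepsilon\le cN^{-r\alpha}$.
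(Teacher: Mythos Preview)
Your proposal is correct and follows essentially the same route as the paper's proof of Theorem~\ref{thm3} (which the paper invokes verbatim for Theorem~\ref{thm4}): derive the error equation for $e=U_h-{}_FU_h\in\mathcal W_h$, test with $e$, drop the nonnegative gradient term, apply Lemma~\ref{lem-4}, bound the kernel-approximation residual using $|\widehat\varepsilon|\lesssim\varepsilon$ on the history region together with the $L^1$ mass $\lesssim t_1^{-\alpha}$ of the shifted kernel, and absorb the self-interaction $E^2$ term using $\varepsilon\le cN^{-r\alpha}$. The only cosmetic differences are that the paper first writes ${}_FU=U-e$ to split the residual into $\mathcal E_1,\mathcal E_2$ (you split $\|{}_FU_h\|\le\|U_h\|+\|e\|$ afterwards) and that the paper uses Young's convolution inequality after extending the inner integral to $[0,t-\delta]$, whereas you use the Schur-type bound with the $L^1$ kernel mass in both variables; both devices yield the same factor $t_1^{-\alpha}$ and the same final estimate.
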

\subsection{Proofs of Theorems \ref{thm3} and \ref{thm4}}
Proof of Theorem \ref{thm3}.
\begin{proof}
Let $e=U-{}_FU$ and $0<\delta_k\le \tau_{k-1}$. Denote $\widehat{\varepsilon}_k(t)=\widehat{\varepsilon}(t)$ for $t> \delta_k$ and
$\widehat{\varepsilon}_k(t)=0$  for $t\in [0,\delta_k]$. Let $\delta=\min_{1\le k \le N}\delta_k$.
We have $e=0$ for $t\in [0,t_2]$.

From  \eqref{DG-1}, \eqref{ker}, \eqref{sec42-eq-2}, \eqref{FHUX},  and \eqref{FDG-1}, we have the following error equation
\begin{equation}\begin{aligned}\label{FDG-err-1}
&\int_{I_k}\langle {}_{t_{k-2}}\partial_t^{\alpha} e(t),X \rangle\textrm{d}t
+\int_{I_k}\int_0^{t_{k-2}}\omega_{-\alpha}(t-s)\langle U(s),X\rangle\textrm{d}s\textrm{d}t\\
&\qquad\qquad\qquad-{}_{F}H^{\alpha,k}( {}_FU,X)
+\int_{I_k} \langle\nabla e(t), \nabla X \rangle\textrm{d}t
=0,\quad \forall X \in \mathcal{W}_h,
\end{aligned}\end{equation}
By \eqref{ker} and \eqref{FHUX}, we have
\begin{equation}\begin{aligned}\label{FDG-err-2}
{}_{F}H^{\alpha,k}({}_FU,X)
=&\int_{t_{k-1}}^{t_k} \int_{0}^{t_{k-2}} \left[1-\widehat{\varepsilon}_k(t-s)\right]
\omega_{-\alpha}(t-s)\langle{}_F U(s),X \rangle\textrm{d}s\textrm{d}t.
\end{aligned}\end{equation}
Inserting \eqref{FDG-err-2} into \eqref{FDG-err-1} yields
\begin{equation}\begin{aligned}\label{FDG-err-3}
&\int_{I_k}\langle  \partial_t^{\alpha} e(t),X \rangle\textrm{d}t+\int_{I_k} \langle\nabla e(t), \nabla X \rangle\textrm{d}t\\
=&\underbrace{\int_{I_k}\int_0^{t_{k-2}}\widehat{\varepsilon}_k(t-s)\omega_{-\alpha}(t-s)\langle e(s),X\rangle\textrm{d}s\textrm{d}t}_{\mathcal{E}_1^k}\\
&-\underbrace{\int_{I_k}\int_0^{t_{k-2}}\widehat{\varepsilon}_k(t-s)\omega_{-\alpha}(t-s)\langle U(s),X\rangle\textrm{d}s\textrm{d}t}_{\mathcal{E}_2^k}.
\end{aligned}\end{equation}
Taking $X=e(t)$ in \eqref{FDG-err-3} and summing up $k$ from 2 to $n$ yields
\begin{equation}\begin{aligned}\label{FDG-err-4}
A_{\alpha}^n(e,e) \le |A_{\alpha}^n(e,e) + A_{0}^n(\nabla e,\nabla e)|
=|\mathcal{E}_1-\mathcal{E}_2|\le |\mathcal{E}_1|+|\mathcal{E}_2|,
\end{aligned}\end{equation}
where $e(t)=0$ for $t\in(0,t_2]$ is used,  $\mathcal{E}_{j}=\sum_{k=2}^n\mathcal{E}_{j}^k,j=1,2$.

We first estimate $\mathcal{E}_2$. By \eqref{ker}, $\omega_{-\alpha}(t)<0$ for $t>0$,
and $t-\delta\ge t_{k-1}-\delta=\tau_{k-1}-\delta+t_{k-2} \ge t_{k-2}$ for $t\in[t_{k-1},t_k],k\ge 2$, we have
\begin{equation}\begin{aligned}\label{FDG-err-6}
|\mathcal{E}_2|\le&  \sum_{k=2}^n\int_{t_{k-1}}^{t_k}\int_{0}^{t_{k-2}}
|\widehat{\varepsilon}_k(t-s)\omega_{-\alpha}(t-s)| \|U(s)\|\|e(t)\|\textrm{d}s\textrm{d}t\\
\le &-C\varepsilon \sum_{k=2}^n\int_{t_{k-1}}^{t_k}\|e(t)\|\int_{0}^{t_{k-2}}\omega_{-\alpha}(t-s)\|U(s)\|\textrm{d}s\textrm{d}t\\
\le &-C\varepsilon \int_{t_2}^{t_n}\|e(t)\|\int_{0}^{t-\delta}\omega_{-\alpha}(t-s)\|U(s)\|\textrm{d}s\textrm{d}t\\
\le& C\varepsilon\sqrt{\int_{t_2}^{t_n}\|e(t)\|^2\textrm{d}t}
\underbrace{\sqrt{\int_{t_2}^{t_n} \bigg(\int_{0}^{t-\delta}\omega_{-\alpha}(t-s)\|U(s)\|\textrm{d}s\bigg)^2\textrm{d}t}}_{\mathcal{J}[U]},
\end{aligned}\end{equation}
where   $C$ is a positive constant independent of any integer number $n$.

Young's convolution inequality \cite[p. 34]{Adams2003} states that
\begin{equation} \label{Young-conv-eq}
\bigg(\int_a^{b}|(w*g)(t)|^2\textrm{d}t\bigg)^{1/2}
\le \bigg(\int_a^{b} |w(t)|\textrm{d}t\bigg) \bigg(\int_a^{b}|g(t)|^2\textrm{d}t\bigg)^{1/2} ,\quad a\le b,
\end{equation}
if all the integrals in \eqref{Young-conv-eq}.

Choosing $g(t)=\|U(t)\|, w(t) = \omega_{-\alpha}(t+\delta)$, $a=0,b=t_n-\delta$  in \eqref{Young-conv-eq} yields
\begin{equation}\begin{aligned}\label{FDG-err-7}
(\mathcal{J}[U])^2\le&\int_{\delta}^{t_n} \left(\int_{0}^{t-\delta}\omega_{-\alpha}(t-s)\|U(s)\|\textrm{d}s \right)^2\textrm{d}t\\
\le& \left(\int_{0}^{t_n-\delta}   |\omega_{-\alpha}(t+\delta)|\textrm{d}t\right)^2 \int_{0}^{t_n-\delta}  \|U(t)\|^2\textrm{d}t\\
=& \left(\frac{1}{\Gamma(1-\alpha)} \left(\delta^{-\alpha} - t_n^{-\alpha}\right)\right)^2
\int_{0}^{t_n-\delta}  \|U(t)\|^2\textrm{d}t\\
\le &\left(\frac{\delta^{-\alpha}}{\Gamma(1-\alpha)}\right)^2 \int_{0}^{t_n}  \|U(t)\|^2\textrm{d}t.
\end{aligned}\end{equation}
Combining \eqref{FDG-err-6} and \eqref{FDG-err-7}, and using the Cauchy--Schwartz inequality,  we have
\begin{equation}\begin{aligned}\label{FDG-err-8}
|\mathcal{E}_2|\le&\frac{c_0}{\Gamma(1-\alpha)}\int_{t_2}^{t_n}\|e(t)\|^2\textrm{d}t
+ \frac{\Gamma(1-\alpha)(C\varepsilon)^2}{4c_0}(\mathcal{J}[U])^2\\
\le& \frac{1}{\Gamma(1-\alpha)}\left(c_0\int_{0}^{t_n}\|e(t)\|^2\textrm{d}t
+ \frac{\left({C\varepsilon\delta^{-\alpha}}\right)^2}{4c_0}\int_{0}^{t_n}  \|U(t)\|^2\textrm{d}t\right), \quad c_0>0.
\end{aligned}\end{equation}
We can similarly obtain
\begin{equation}\begin{aligned}\label{FDG-err-10}
|\mathcal{E}_1|\le& \frac{1}{\Gamma(1-\alpha)}\left(c_0\int_{0}^{t_n}\|e(t)\|^2\textrm{d}t
+ \frac{\left({C\varepsilon\delta^{-\alpha}}\right)^2}{4c_0}\int_{0}^{t_n}  \|e(t)\|^2\textrm{d}t\right)\\
\le&  \frac{2c_0}{\Gamma(1-\alpha)}\int_{0}^{t_n}\|e(t)\|^2\textrm{d}t
\end{aligned}\end{equation}
when ${\left({C\varepsilon\delta^{-\alpha}}\right)^2}/({4c_0})\le c_0$, i.e., $\varepsilon \le 2c_0\delta^{\alpha}/C$.
%

Combining \eqref{FDG-err-4}, \eqref{FDG-err-8}, and \eqref{FDG-err-10} yields
\begin{equation} \label{FDG-err-11}
A_{\alpha}^n(e,e)\le 3c_0  \int_{0}^{t_n} \frac{\|e(t)\|^2}{\Gamma(1-\alpha)}\textrm{d}t
+\frac{\left({C\varepsilon\delta^{-\alpha}}\right)^2}{4c_0\Gamma(1-\alpha)}\int_{0}^{t_n} \|U(t)\|^2\textrm{d}t.
\end{equation}
Choosing $c_0=t_n^{-\alpha}/8$ and $\delta=t_1$  in \eqref{FDG-err-11}, and using
$$A_{\alpha}^n(e,e)\ge\frac{1}{2}\frac{t_n^{-\alpha}}{\Gamma(1-\alpha)}\int_{0}^{t_n} \|e(t)\|^2\textrm{d}t
=\frac{4c_0}{\Gamma(1-\alpha)}\int_{0}^{t_n} \|e(t)\|^2\textrm{d}t$$
by Lemma \ref{lem-4}, we derive
\begin{equation*}\label{FDG-err-12}
\int_{0}^{t_n}  \|e(t)\|^2\textrm{d}t \lesssim  (\varepsilon t_n^{\alpha}t_1^{-\alpha})^2\int_{0}^{t_n} \|U(t)\|^2\textrm{d}t
 \lesssim  (\varepsilon n^{r\alpha})^2\int_{0}^{t_n} \|U(t)\|^2\textrm{d}t
\end{equation*}
when $\varepsilon \le 2c_0\delta^{\alpha}C^{-1}=(4C)^{-1}t_1^{\alpha}t_n^{-\alpha}=(4C)^{-1} n^{-r\alpha}$.
The proof is completed.
\end{proof}

The proof of Theorem \ref{thm4} is similar to that of Theorem \ref{thm3}, which is omitted.

\section{Numerical results}\label{sec4}
In this section, we perform numerical experiments to verify the efficiency of the scheme.
\begin{example}\label{eg1}
Consider the following initial value problem
\begin{equation}
\label{fffeg1}
\left\{\begin{aligned}
&_{0}^CD_t^\alpha u(t)+u(t)=f(t),\quad t\in (0,T],\ T>0,\\
&u(0)=1.\\
\end{aligned}\right.
\end{equation}
Assume that the exact solution to \eqref{fffeg1} is $u(t)=1+t^{\alpha}+t^{2\alpha}$, and
$$f(t)=1+\Gamma(\alpha+1)+[1+\Gamma(2\alpha+1)/\Gamma(\alpha+1)]t^{\alpha}+t^{2\alpha}.$$
\end{example}


Taking $T=4$, we apply linear and quadratic DG method in the computation, the average errors
$\left(\sum_{n=1}^{N}\tau_n\|e^n\|^2\right)^{1/2}$
are shown in Tables \ref{tab1}--\ref{tab2}.  We observe $(p+1)$-order convergence
when $r\geq\frac{2p+2-\alpha}{1+2\sigma-\alpha}$, which agrees with the theoretical
result.
Figure \ref{fig1}(a) shows the computational time of the fast method and the direct method when $\alpha=0.8$, and we observe that the fast method really outperforms the direct method in efficiency and saves computational cost. The difference between the fast solution and the direct solution is presented in Figure \ref{fig1}(b). We can see that the two solutions are very close, which means the error caused from the trapezoidal rule in fast method is very small.

\begin{table}[!h]
\setlength\tabcolsep{3pt}
\footnotesize
		\caption{The average error   for Example \ref{eg1} when $p=1$.}
		\label{tab1}
		\centering
		\begin{tabular}{|c|c|cc|cc|cc|cc|cc|}
			\hline
			\multirow{2}{*} {$\alpha$} &\multirow{2}{*} {$N$} & \multicolumn{2}{c|}{$r=1$} &\multicolumn{2}{c|}{$r=1.2$} &
\multicolumn{2}{c|}{$r=1.6$} &  \multicolumn{2}{c|}{$r=\frac{4-\alpha}{1+\alpha}$} &  \multicolumn{2}{c|}{$r=3.5$}
\\ \cline{3-12}
&& Error & Rate& Error & Rate &Error & Rate  & Error & Rate & Error & Rate   \\
			\hline
   &32   & 1.67e-02 &-     & 9.66e-03 & -    & 3.26e-03 & -    & 4.21e-04 &-     & 4.41e-04 &-\\
   &64   & 9.67e-03 & 0.79 & 4.97e-03 & 0.96 & 1.32e-03 & 1.30 & 1.18e-04 & 1.84 & 1.24e-04 & 1.83\\
0.2&128  & 5.56e-03 & 0.80 & 2.53e-03 & 0.97 & 5.28e-04 & 1.32 & 3.24e-05 & 1.86 & 3.42e-05 & 1.86\\
   &256  & 3.18e-03 & 0.81 & 1.28e-03 & 0.98 & 2.08e-04 & 1.34 & 8.80e-06 & 1.88 & 9.29e-06 & 1.88\\
   &512  & 1.81e-03 & 0.81 & 6.45e-04 & 0.99 & 8.15e-05 & 1.35 & 2.36e-06 & 1.90 & 2.50e-06 & 1.89\\
\hline
   &32   & 7.32e-03 & -    & 3.24e-03 &-     & 9.12e-04 & -    & 5.80e-04 & -    & 8.37e-04 &-    \\
   &64   & 3.03e-03 & 1.27 & 1.11e-03 & 1.54 & 2.46e-04 & 1.89 & 1.52e-04 & 1.93 & 2.21e-04 & 1.92\\
0.5&128  & 1.23e-03 & 1.30 & 3.74e-04 & 1.57 & 6.52e-05 & 1.92 & 3.93e-05 & 1.95 & 5.75e-05 & 1.94\\
   &256  & 4.88e-04 & 1.33 & 1.23e-04 & 1.60 & 1.70e-05 & 1.94 & 1.01e-05 & 1.96 & 1.48e-05 & 1.96\\
   &512  & 1.91e-04 & 1.35 & 3.99e-05 & 1.63 & 4.39e-06 & 1.95 & 2.56e-06 & 1.97 & 3.79e-06 & 1.97 \\
   \hline
   &32   & 1.28e-03 & -    & 5.73e-04 &-     & 6.03e-04 & -    & 7.03e-04 & -    & 2.15e-03 &-\\
   &64   & 4.32e-04 & 1.56 & 1.57e-04 & 1.87 & 1.54e-04 & 1.97 & 1.79e-04 & 1.97 & 5.47e-04 & 1.97\\
0.8&128  & 1.41e-04 & 1.61 & 4.24e-05 & 1.89 & 3.89e-05 & 1.98 & 4.53e-05 & 1.98 & 1.39e-04 & 1.98\\
   &256  & 4.51e-05 & 1.65 & 1.13e-05 & 1.90 & 9.79e-06 & 1.99 & 1.14e-05 & 1.99 & 3.50e-05 & 1.99\\
   &512  & 1.41e-05 & 1.68 & 3.00e-06 & 1.92 & 2.46e-06 & 1.99 & 2.87e-06 & 1.99 & 8.80e-06 & 1.99\\
    \hline
\end{tabular}
	\end{table}

\begin{figure} 
	\centering
	\subfigure[Computational time]{
		\begin{minipage}[t]{0.4\textwidth}
			\centering
			\includegraphics[width=2in]{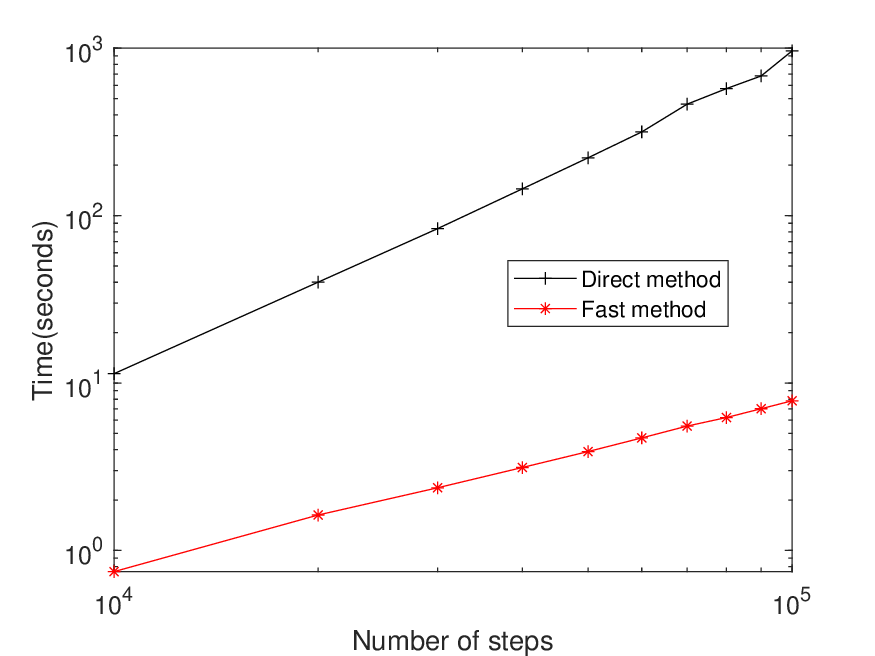}
	\end{minipage}}
	\subfigure[Difference]{
		\begin{minipage}[t]{0.4\textwidth}
			\centering
			\includegraphics[width=2in]{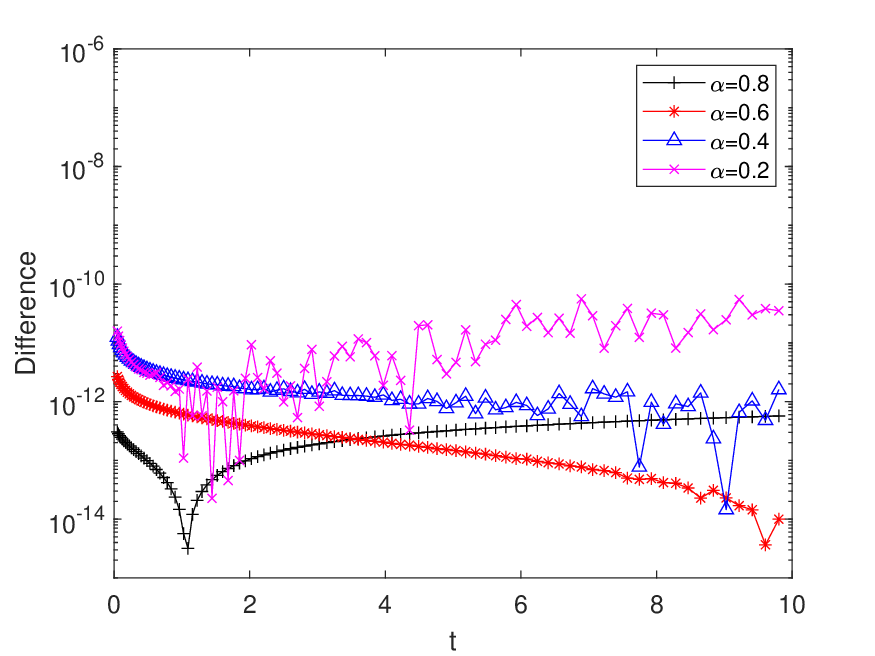}
	\end{minipage}}
	\caption{(a): The computational time of the fast method and direct method; (b):
		The difference between the numerical solutions of the fast method and direct method; Example \ref{eg1}, $Q=256$, $M=10000$, $r=2$, $p=1$.}
	\label{fig1}
\end{figure} 

 \begin{table}[!h]
\setlength\tabcolsep{3pt}
\footnotesize
		\caption{The average error  for Example \ref{eg1} when $p=2$.}
		\label{tab2}
		\centering
		\begin{tabular}{|c|c|cc|cc|cc|cc|cc|}
			\hline
			\multirow{2}{*} {$\alpha$} &\multirow{2}{*} {$N$} & \multicolumn{2}{c|}{$r=2$} &\multicolumn{2}{c|}{$r=2.2$} &
\multicolumn{2}{c|}{$r=2.5$} &  \multicolumn{2}{c|}{$r=\frac{6-\alpha}{1+\alpha}$} &  \multicolumn{2}{c|}{$r=5$}
\\ \cline{3-12}&& Error & Rate& Error & Rate &Error & Rate  & Error & Rate & Error & Rate   \\
			\hline
   &32  & 6.98e-04 &-     & 4.11e-04 & -    & 2.01e-04 &-     & 2.75e-05 & -    & 2.81e-05 &-\\
   &64  & 2.86e-04 & 1.29 & 1.53e-04 & 1.42 & 6.45e-05 & 1.64 & 3.98e-06 & 2.79 & 4.06e-06 & 2.79\\
0.2&128 & 1.15e-04 & 1.31 & 5.61e-05 & 1.45 & 2.03e-05 & 1.67 & 5.54e-07 & 2.84 & 5.63e-07 & 2.85\\
   &256 & 4.62e-05 & 1.32 & 2.03e-05 & 1.47 & 6.32e-06 & 1.69 & 7.54e-08 & 2.88 & 7.65e-08 & 2.88\\
   &512 & 1.83e-05 & 1.34 & 7.29e-06 & 1.48 & 1.94e-06 & 1.70 & 1.01e-08 & 2.90 & 1.02e-08 & 2.90 \\
\hline
   &32  & 2.33e-04 & -    & 1.16e-04 & -    & 4.31e-05 &-     & 5.81e-06 & -    & 6.13e-06 &-\\
   &64  & 7.37e-05 & 1.66 & 3.19e-05 & 1.86 & 9.61e-06 & 2.17 & 8.62e-07 & 2.75 & 1.05e-06 & 2.54\\
0.5&128 & 2.23e-05 & 1.72 & 8.39e-06 & 1.93 & 2.03e-06 & 2.24 & 1.21e-07 & 2.83 & 1.66e-07 & 2.66\\
   &256 & 6.55e-06 & 1.77 & 2.13e-06 & 1.98 & 4.12e-07 & 2.30 & 1.65e-08 & 2.88 & 2.43e-08 & 2.77\\
   &512 & 1.87e-06 & 1.81 & 5.27e-07 & 2.02 & 8.16e-08 & 2.34 & 2.19e-09 & 2.91 & 3.38e-09 & 2.85 \\
\hline
   &32  & 4.04e-05 &-     & 1.72e-05 & -    & 4.86e-06 &-     & 1.76e-06 &-     & 6.77e-06 &-\\
   &64  & 1.11e-05 & 1.87 & 4.09e-06 & 2.08 & 9.25e-07 & 2.39 & 2.39e-07 & 2.88 & 9.03e-07 & 2.91\\
0.8&128 & 2.91e-06 & 1.93 & 9.28e-07 & 2.14 & 1.69e-07 & 2.46 & 3.15e-08 & 2.92 & 1.17e-07 & 2.95\\
   &256 & 7.47e-07 & 1.96 & 2.06e-07 & 2.17 & 3.01e-08 & 2.48 & 4.11e-09 & 2.94 & 1.50e-08 & 2.97\\
   &512 & 1.89e-07 & 1.98 & 4.53e-08 & 2.19 & 5.34e-09 & 2.50 & 5.33e-10 & 2.95 & 1.89e-09 & 2.98 \\
\hline
\end{tabular}
	\end{table}

\begin{table}[!h]
\setlength\tabcolsep{3pt}
\footnotesize
		\caption{The average error   in time for Example \ref{eg22} when $p=1$ and $h=1/256$.}
		\label{tab3}
		\centering
		\begin{tabular}{|c|c|cc|cc|cc|cc|cc|}
			\hline
			\multirow{2}{*} {$\alpha$} &\multirow{2}{*} {$N$} & \multicolumn{2}{c|}{$r=1$} &\multicolumn{2}{c|}{$r=1.2$} &
\multicolumn{2}{c|}{$r=1.6$} &  \multicolumn{2}{c|}{$r=\frac{4-\alpha}{1+\alpha}$} &  \multicolumn{2}{c|}{$r=3.5$}
\\ \cline{3-12}
&& Error & Rate& Error & Rate &Error & Rate  & Error & Rate & Error & Rate   \\
			\hline
   &32  & 1.94e-01 & -    & 1.32e-01 & -    & 6.14e-02 & -    & 6.79e-03 & -    & 6.12e-03 &-\\
   &64  & 1.14e-01 & 0.77 & 7.00e-02 & 0.92 & 2.65e-02 & 1.21 & 1.54e-03 & 2.14 & 1.39e-03 & 2.13\\
0.2&128 & 6.68e-02 & 0.77 & 3.71e-02 & 0.92 & 1.15e-02 & 1.21 & 3.56e-04 & 2.11 & 3.29e-04 & 2.08\\
   &256 & 3.93e-02 & 0.76 & 1.97e-02 & 0.91 & 4.97e-03 & 1.21 & 8.37e-05 & 2.09 & 7.98e-05 & 2.05\\
   &512 & 2.32e-02 & 0.76 & 1.05e-02 & 0.91 & 2.16e-03 & 1.21 & 2.00e-05 & 2.06 & 1.96e-05 & 2.02 \\
\hline
   &32  & 4.69e-02 &-     & 2.46e-02 &-     & 6.83e-03 & -    & 1.78e-03 &  -   & 2.20e-03 &-\\
   &64  & 2.12e-02 & 1.15 & 9.47e-03 & 1.38 & 1.90e-03 & 1.85 & 4.40e-04 & 2.02 & 5.66e-04 & 1.96\\
0.5&128 & 9.50e-03 & 1.16 & 3.57e-03 & 1.41 & 5.06e-04 & 1.91 & 1.08e-04 & 2.02 & 1.44e-04 & 1.98\\
   &256 & 4.20e-03 & 1.18 & 1.30e-03 & 1.45 & 1.30e-04 & 1.96 & 2.69e-05 & 2.01 & 3.62e-05 & 1.99\\
   &512 & 1.82e-03 & 1.21 & 4.61e-04 & 1.50 & 3.25e-05 & 2.00 & 6.68e-06 & 2.01 & 9.09e-06 & 1.99 \\
   \hline
   &32  & 1.17e-02 &-     & 4.74e-03 &-     & 1.22e-03 &-     & 1.15e-03 &-     & 3.16e-03 &-\\
   &64  & 3.76e-03 & 1.64 & 1.15e-03 & 2.04 & 2.60e-04 & 2.23 & 2.81e-04 & 2.03 & 8.44e-04 & 1.90\\
0.8&128 & 1.12e-03 & 1.74 & 2.53e-04 & 2.19 & 6.23e-05 & 2.06 & 7.09e-05 & 1.99 & 2.19e-04 & 1.95\\
   &256 & 3.16e-04 & 1.83 & 5.15e-05 & 2.30 & 1.57e-05 & 1.99 & 1.79e-05 & 1.99 & 5.59e-05 & 1.97\\
   &512 & 8.51e-05 & 1.89 & 1.04e-05 & 2.31 & 3.97e-06 & 1.98 & 4.51e-06 & 1.99 & 1.42e-05 & 1.98 \\
    \hline
\end{tabular}
	\end{table}
 \begin{example}\label{eg22}
Consider the following initial-boundary value problem
\begin{equation}
\label{feg11}
\left\{\begin{aligned}
&_{0}^CD_t^\alpha u(x,t)=\Delta u(x,t)+f(x,t),\quad (x,t)\in \Omega\times (0,T],T>0,\\
&u(x,0)=\sin(x),\quad x \in \bar{\Omega},\\
&u(x,t)=0, \quad(x,t)\in \partial\Omega\times [0,T],
\end{aligned}\right.
\end{equation}
where $\Omega = (0,1)$ and $0< \alpha <1$.
The exact solution to (\ref{eg22}) is $u=(1+t^\alpha+t^{2\alpha})\sin(2\pi x)$ when
$$f(t)=(4\pi^2+\Gamma(\alpha+1)+(4\pi^2+\Gamma(2\alpha+1)/\Gamma(\alpha+1))t^{\alpha}+4\pi^2t^{2\alpha})\sin(2\pi x).$$
\end{example}

\begin{table}[!h]
\setlength\tabcolsep{3pt}
\footnotesize
		\caption{The average error    in space for Example \ref{eg22} when $p=1$ and $N=20000$.}
		\label{tab4}
		\centering
		\begin{tabular}{|c|c|cc|cc|cc|cc|cc|}
			\hline
			\multirow{2}{*} {$\alpha$} &\multirow{2}{*} {$h$} & \multicolumn{2}{c|}{$r=1$} &\multicolumn{2}{c|}{$r=1.2$} &
\multicolumn{2}{c|}{$r=1.6$} &  \multicolumn{2}{c|}{$r=\frac{4-\alpha}{1+\alpha}$} &  \multicolumn{2}{c|}{$r=3.5$}
\\ \cline{3-12}
&& Error & Rate& Error & Rate &Error & Rate  & Error & Rate & Error & Rate   \\
			\hline
   &1/4   & 4.04e-01 &-     & 4.04e-01 & -    & 4.04e-01 &-     & 4.04e-01 &-     & 4.04e-01 &-\\
   &1/8   & 1.24e-01 & 1.70 & 1.24e-01 & 1.70 & 1.24e-01 & 1.70 & 1.24e-01 & 1.70 & 1.24e-01 & 1.70\\
0.2&1/16  & 3.26e-02 & 1.93 & 3.26e-02 & 1.93 & 3.26e-02 & 1.93 & 3.26e-02 & 1.93 & 3.26e-02 & 1.93\\
   &1/32  & 8.35e-03 & 1.97 & 8.24e-03 & 1.98 & 8.23e-03 & 1.98 & 8.23e-03 & 1.98 & 8.23e-03 & 1.98\\
   &1/64  & 2.48e-03 & 1.75 & 2.09e-03 & 1.98 & 2.06e-03 & 2.00 & 2.06e-03 & 2.00 & 2.06e-03 & 2.00\\
\hline
   &1/4   & 3.41e-01 & -    & 3.41e-01 & -    & 3.41e-01 &-     & 3.41e-01 & -    & 3.41e-01 &-\\
   &1/8   & 1.05e-01 & 1.71 & 1.05e-01 & 1.71 & 1.05e-01 & 1.71 & 1.05e-01 & 1.71 & 1.05e-01 & 1.71\\
0.5&1/16  & 2.73e-02 & 1.93 & 2.73e-02 & 1.93 & 2.73e-02 & 1.93 & 2.73e-02 & 1.93 & 2.73e-02 & 1.93\\
   &1/32  & 6.91e-03 & 1.98 & 6.91e-03 & 1.98 & 6.91e-03 & 1.98 & 6.91e-03 & 1.98 & 6.91e-03 & 1.98\\
   &1/64  & 1.73e-03 & 2.00 & 1.73e-03 & 2.00 & 1.73e-03 & 2.00 & 1.73e-03 & 2.00 & 1.73e-03 & 2.00\\
   \hline
   &1/4   & 2.87e-01 & -    & 2.87e-01 &  -   & 2.87e-01 & -    & 2.87e-01 & -    & 2.87e-01 &-\\
   &1/8   & 8.69e-02 & 1.72 & 8.69e-02 & 1.72 & 8.69e-02 & 1.72 & 8.69e-02 & 1.72 & 8.69e-02 & 1.72\\
0.8&1/16  & 2.27e-02 & 1.94 & 2.27e-02 & 1.94 & 2.27e-02 & 1.94 & 2.27e-02 & 1.94 & 2.27e-02 & 1.94\\
   &1/32  & 5.72e-03 & 1.99 & 5.72e-03 & 1.99 & 5.72e-03 & 1.99 & 5.72e-03 & 1.99 & 5.72e-03 & 1.99\\
   &1/64  & 1.43e-03 & 2.00 & 1.43e-03 & 2.00 & 1.43e-03 & 2.00 & 1.43e-03 & 2.00 & 1.43e-03 & 2.00 \\
    \hline
\end{tabular}
	\end{table}

Considering $T=4$, we employ linear DG and linear FEM in the computation.
Table \ref{tab3} gives the average errors and convergence rates in time of Example \ref{eg22} when $\alpha=0.2, 0.5$ and $0.8$, respectively, for various choices of $N$ and $r$. The optimal rates of order $O(N^{-2})$
 is observed  when $r\geq\frac{2p+2-\alpha}{1+2\sigma-\alpha}$.
The errors and convergence rates in space are displayed in Table \ref{tab4},
which second-order accuracy is observed.
The numerical results are consistent with Theorem \ref{thm-2}.

\section{Conclusion}
This work develops a high-order fast time-stepping DG FEM for the subdiffusion equation. We prove the optimal error estimate of the time-stepping DG method, which displays that the error in the $L^2((0,T),L^2(\Omega))$-norm is $O(N^{-p-1} + h^{m+1} + \varepsilon N^{r\alpha})$. Our result improves the error bound in \cite{MusAFN16}.
Numerical examples are given to demonstrate the theoretical analysis.

\appendix
\section{Proof of Lemma \ref{lem-8}} \label{appx-a}
For any function $v\in C([0,T])$, the piecewise Lagrange interpolant $\Pi^p v$    is  defined by
$$ \Pi^p v(t_{n,j})=v(t_{n,j}),\quad t_{n,j}\in [t_{n-1},t_n],\quad 0\le j \le p,\quad 1 \le n \le N,$$
where $\Pi^p v(t)|_{t\in[t_{n-1},t_n]}$ is the   polynomial with degree $p$,
$t_{n,j}=t_{n-1}+j\tau_n/p,0\le j \le p$, $p\ge 1$.
Denote the interpolation error as $\eta_v=v-\Pi^p v$.

Denote the  right-hand-side RL integral operator ${}_{t}\partial_b^{-\alpha}$ as
$${}_{t}\partial_b^{-\alpha}v(t)
=\int_t^b\frac{(s-t)^{\alpha-1}}{\Gamma(\alpha)}v(s)\textrm{d}s,\quad \alpha \ge 0.$$
Then, ${}_{t}\partial_b^{\alpha}$ denotes the  right-hand-side RL derivative operator
of order $\alpha$ in the sense of the principal value \cite{SamKilM93}.

Let  $J_n=(0,t_n)$. Let $J_L^{\alpha}(J_n)$ and $J_R^{\alpha}(J_n)$ be the
left and right factional derivative spaces, respectively.
 Define the semi-norms $|\cdot|_{J_L^{\alpha}(J_n)}$ and $|\cdot|_{J_R^{\alpha}(J_n)}$, respectively,  by
$$|v|_{J_L^{\alpha}(J_n)} = \left(\int_0^{t_n}|\partial_t^{\alpha}v(t)|^2\textrm{d}t\right)^{1/2} \quad \text{and}\quad
|v|_{J_R^{\alpha}(J_n)} = \left(\int_0^{t_n}|{}_t\partial_{t_n}^{\alpha}v(t)|^2\textrm{d}t\right)^{1/2}.$$
The norms $\|\cdot\|_{J_L^{\alpha}(J_n)}$ and $\|\cdot\|_{J_R^{\alpha}(J_n)}$ are, respectively, defined by
$$\|v\|_{J_L^{\alpha}(J_n)} = \left(|v|_{J_L^{\alpha}(J_n)}^2 + \|v\|^2_{L^2(J_n)}\right)^{1/2}   \text{ and }
\|v\|_{J_R^{\alpha}(J_n)} = \left(|v|_{J_R^{\alpha}(J_n)}^2+\|v\|^2_{L^2(J_n)}\right)^{1/2},$$
where
$\|v\|^2_{L^2(J_n)}=\int_0^{t_n} |v(t)|^2\textrm{d}t=|v|_{J_L^{0}(J_n)}.$
Readers can refer to \cite{Roop04} for details.

\begin{lemma}\label{lemA2}
Let $\alpha_1+\alpha_2=\alpha<1$, $\alpha_1,\alpha_2\ge 0$, $w\in J_L^{\alpha}(J_n)$, $v\in J_R^{\alpha_2}(J_n)$. Then
$$\int_0^{t_n}v(t)\partial_t^{\alpha}w(t)\textrm{d}t
=\int_0^{t_n}\partial_t^{\alpha_1}w(t) {}_{t}\partial_{t_n}^{\alpha_2}v(t) \textrm{d}t.$$
\end{lemma}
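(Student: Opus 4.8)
The plan is to reduce the asserted identity to a single symbol-level fact, namely $(i\xi)^{\alpha_1}\,\overline{(-i\xi)^{\alpha_2}} = (i\xi)^{\alpha_1+\alpha_2} = (i\xi)^{\alpha}$ for real $\xi$, which encodes both that the right Riemann--Liouville derivative is the formal adjoint of the left one and that fractional orders add. First I would extend $w$ and $v$ by zero outside $(0,t_n)$; since the lower limit of ${}_0\partial_t^{\bullet}$ is $0$ and the upper limit of ${}_t\partial_{t_n}^{\bullet}$ is $t_n$, these extensions make the restricted operators agree on $(0,t_n)$ with the whole-line operators ${}_{-\infty}\partial_t^{\bullet}$ and ${}_t\partial_{\infty}^{\bullet}$, whose Fourier multipliers are $(i\xi)^{\bullet}$ and $(-i\xi)^{\bullet}$ (principal branch). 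Because $w\in J_L^{\alpha}(J_n)$ and $v\in J_R^{\alpha_2}(J_n)$, all of $\partial_t^{\alpha}w$, $\partial_t^{\alpha_1}w$, $v$ and ${}_t\partial_{t_n}^{\alpha_2}v$ lie in $L^2(J_n)$, so both bilinear pairings are well defined and Plancherel applies. One checks moreover that the zero-extended integrands on each side vanish outside $(0,t_n)$, so the interval integrals equal the corresponding integrals over $\mathbb{R}$.

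Then I would evaluate both sides by Parseval's identity. On the left,
\begin{equation*}
\int_0^{t_n} v\,\partial_t^{\alpha}w\,\textrm{d}t = \int_{\mathbb{R}} \widehat{\partial_t^{\alpha}w}(\xi)\,\overline{\widehat{v}(\xi)}\,\textrm{d}\xi = \int_{\mathbb{R}} (i\xi)^{\alpha}\widehat{w}(\xi)\,\overline{\widehat{v}(\xi)}\,\textrm{d}\xi,
\end{equation*}
while on the right, using $\overline{(-i\xi)^{\alpha_2}} = (i\xi)^{\alpha_2}$ for real $\xi$,
\begin{equation*}
\int_0^{t_n} (\partial_t^{\alpha_1}w)\,({}_t\partial_{t_n}^{\alpha_2}v)\,\textrm{d}t = \int_{\mathbb{R}} (i\xi)^{\alpha_1}\widehat{w}(\xi)\,\overline{(-i\xi)^{\alpha_2}\widehat{v}(\xi)}\,\textrm{d}\xi = \int_{\mathbb{R}} (i\xi)^{\alpha_1}(i\xi)^{\alpha_2}\widehat{w}(\xi)\,\overline{\widehat{v}(\xi)}\,\textrm{d}\xi.
\end{equation*}
Since $(i\xi)^{\alpha_1}(i\xi)^{\alpha_2} = (i\xi)^{\alpha}$, the two frequency integrals coincide, which is the claim. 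Equivalently, and avoiding Fourier analysis altogether, one may start from the kernel identity $\int_0^{t_n} v\,({}_0\partial_t^{-\mu}u)\,\textrm{d}t = \int_0^{t_n} u\,({}_t\partial_{t_n}^{-\mu}v)\,\textrm{d}t$, which is immediate from Fubini on the triangle $\{0\le s\le t\le t_n\}$, and then combine it with the semigroup factorization $\partial_t^{\alpha}w = {}_0\partial_t^{\alpha_2}(\partial_t^{\alpha_1}w)$ and one ordinary integration by parts to transfer ${}_0\partial_t^{\alpha_2}$ onto $v$ as ${}_t\partial_{t_n}^{\alpha_2}$.

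The hard part is not the formal manipulation but its rigorous justification on the finite interval. I would first verify the multiplier identities $\widehat{\partial_t^{\alpha}w} = (i\xi)^{\alpha}\widehat{w}$ and $\widehat{{}_t\partial_{t_n}^{\alpha_2}v} = (-i\xi)^{\alpha_2}\widehat{v}$ for functions smooth and compactly supported in $(0,t_n)$, taking care that the principal branches of $(i\xi)^{\bullet}$ and $(-i\xi)^{\bullet}$ are chosen consistently so that $\overline{(-i\xi)^{\alpha_2}} = (i\xi)^{\alpha_2}$ holds for every real $\xi$; this is exactly the point where $\alpha<1$, so that the orders split without crossing an integer, keeps the symbols single valued. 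I would then pass to general $w\in J_L^{\alpha}(J_n)$ and $v\in J_R^{\alpha_2}(J_n)$ by density, using that smooth compactly supported functions are dense in these completion spaces \cite{Roop04} and that $\partial_t^{\alpha_1}$ maps $J_L^{\alpha}(J_n)$ continuously into $L^2(J_n)$ for $\alpha_1\le\alpha$, so that both bilinear forms are continuous and the identity survives the limit. In the Fubini-based alternative the analogous obstacle is the vanishing of the boundary terms generated by the ordinary integration by parts, which is again precisely what membership in the homogeneous derivative spaces $J_L^{\alpha}(J_n)$ and $J_R^{\alpha_2}(J_n)$ guarantees.
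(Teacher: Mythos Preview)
Your proposal is correct, and your primary Fourier/Plancherel argument---extend by zero, identify the left and right Riemann--Liouville operators with the multipliers $(i\xi)^{\bullet}$ and $(-i\xi)^{\bullet}$, then use $\overline{(-i\xi)^{\alpha_2}}=(i\xi)^{\alpha_2}$ together with density of smooth compactly supported functions---is exactly the route of \cite[Lemma 3.1.4]{Roop04}, which is precisely what the paper invokes and then omits. The alternative Fubini-plus-integration-by-parts argument you sketch is also valid and is in fact a more elementary derivation of the same identity, but since the paper defers entirely to Roop's framework there is no substantive difference to highlight.
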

\begin{proof}
The proof is similar to that of    \cite[Lemma 3.1.4]{Roop04}, the details are omitted.
The proof is complete.
\end{proof}

\begin{lemma}\label{lemA1}
Let  $v\in J_L^{\alpha}(J_n),0<\alpha<1/2$. Then
$$|v|_{J_L^{\alpha}(J_n)}^2\lesssim
\Big|\int_0^{t_n}\partial_t^{\alpha}v(t) {}_{t}\partial_{t_n}^{\alpha}v(t)   \textrm{d}t\Big|
\lesssim |v|_{J_R^{\alpha}(J_n)}^2 \lesssim |v|_{J_L^{\alpha}(J_n)}^2.$$
\end{lemma}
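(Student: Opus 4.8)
The plan is to show that the entire three-term chain collapses to a single exact identity relating the bilinear quantity in the middle to the two seminorms, after which all three inequalities are immediate (indeed equalities up to positive constants). Concretely, I would first exploit that $0<\alpha<1/2$ means there is no fractional trace, so the zero extension $\tilde v$ of $v$ to all of $\mathbb{R}$ lies in $H^\alpha(\mathbb{R})$ with $\|\tilde v\|_{H^\alpha(\mathbb{R})}\lesssim \|v\|_{J_L^\alpha(J_n)}$, and the left and right Riemann--Liouville derivatives $\partial_t^\alpha v$ and ${}_t\partial_{t_n}^\alpha v$ on $J_n=(0,t_n)$ coincide with the whole-line operators ${}_{-\infty}\partial_t^\alpha\tilde v$ and ${}_t\partial_{\infty}^\alpha\tilde v$ on the support. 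This reduces every quantity appearing in the statement to an integral over $\mathbb{R}$ of the zero-extended function.

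Next I would pass to Fourier variables. The whole-line left and right RL derivatives act as the Fourier multipliers $(i\omega)^\alpha$ and $(-i\omega)^\alpha$, and a branch computation gives $\overline{(-i\omega)^\alpha}=(i\omega)^\alpha$ for real $\omega$. Parseval's identity then yields, on one hand, that the two seminorms are in fact equal,
$$|v|_{J_L^\alpha(J_n)}^2=\int_{\mathbb{R}}|\omega|^{2\alpha}|\widehat{\tilde v}(\omega)|^2\,\mathrm{d}\omega=|v|_{J_R^\alpha(J_n)}^2,$$
and, on the other hand, using $(i\omega)^{2\alpha}=|\omega|^{2\alpha}e^{i\alpha\pi\,\mathrm{sgn}(\omega)}$ and taking the real part (the integral is real since $v$ is real-valued and both operators have real kernels),
$$\int_0^{t_n}\partial_t^\alpha v(t)\,{}_t\partial_{t_n}^\alpha v(t)\,\mathrm{d}t=\cos(\alpha\pi)\int_{\mathbb{R}}|\omega|^{2\alpha}|\widehat{\tilde v}(\omega)|^2\,\mathrm{d}\omega=\cos(\alpha\pi)\,|v|_{J_L^\alpha(J_n)}^2.$$
Because $0<\alpha<1/2$ forces $\cos(\alpha\pi)>0$, this single identity delivers the full chain: the first $\lesssim$ follows by dividing through by $\cos(\alpha\pi)$, the second from the seminorm equality $|v|_{J_L^\alpha(J_n)}=|v|_{J_R^\alpha(J_n)}$, and the third is that equality itself.

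The main obstacle is not the algebra but the functional-analytic justification of the reduction to $\mathbb{R}$: one must verify that the zero extension is bounded from the finite-interval fractional space into $H^\alpha(\mathbb{R})$ and that the finite-interval RL operators agree with the restrictions of the whole-line operators applied to $\tilde v$. Both facts hinge precisely on the hypothesis $\alpha<1/2$ (absence of a trace, equivalently $H_0^\alpha=H^\alpha$), which is exactly why the lemma is stated only in that range. I would establish the Fourier identities first for $v\in C_c^\infty(J_n)$, where all manipulations are legitimate, and then extend to general $v\in J_L^\alpha(J_n)$ by density together with the mapping properties recorded in Lemma \ref{lemA2} and \cite{Roop04}, citing the latter for the boundedness of the zero extension rather than reproving these standard embedding facts.
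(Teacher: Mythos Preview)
Your approach is correct and essentially identical to the paper's, which simply cites Theorems 3.1.3, 3.1.5, and 3.1.13 of \cite{Roop04} --- precisely the zero-extension/Fourier-multiplier argument you outline. One caveat: your displayed equality $|v|_{J_L^{\alpha}(J_n)}^2=\int_{\mathbb{R}}|\omega|^{2\alpha}|\widehat{\tilde v}|^2\,\mathrm{d}\omega$ holds only as $\leq$ a priori (since ${}_{-\infty}\partial_t^{\alpha}\tilde v$ has a nonvanishing tail on $(t_n,\infty)$), the reverse $\lesssim$ being exactly the nontrivial zero-extension boundedness for $\alpha<1/2$ that you already plan to import from \cite{Roop04}; once you downgrade those two equalities to equivalences $\simeq$, the chain stands, while the bilinear-form identity with $\cos(\alpha\pi)$ is genuinely exact because the product ${}_{-\infty}\partial_t^{\alpha}\tilde v\cdot{}_t\partial_\infty^{\alpha}\tilde v$ is supported in $\bar J_n$.
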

\begin{proof}
Following the proofs of Theorems 3.1.3, 3.1.5, and 3.1.13 in \cite{Roop04}
yields the desired result. The proof is complete.
\end{proof}

\begin{lemma}\label{lemA3}
Let $v\in J_R^{\gamma}(J_n)\cap C(\bar{J}_n)$. Then
$$\|\Pi^pv - v\|_{J_R^{\beta}(J_n)}\lesssim N^{\beta-\gamma} \|v\|_{J_R^{\gamma}(J_n)},\quad 0\le \beta\le \gamma \le 2.$$
\end{lemma}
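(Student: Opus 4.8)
The plan is to reduce the fractional estimate to integer-order interpolation bounds and then reach the intermediate real orders by interpolation of spaces. The one mesh-specific ingredient is that the graded grid \eqref{TE-22} has subintervals that grow with $n$, so by the mean value theorem $\tau_n = T[(n/N)^r-((n-1)/N)^r]\le Tr/N$ and hence $\max_{1\le k\le N}\tau_k\lesssim N^{-1}$. This is exactly what turns local, element-by-element bounds into the clean power $N^{\beta-\gamma}$.

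First I would prove the estimate for an integer target order $b\in\{0,1,2\}$ and a real source order $\gamma$ with $b\le\gamma$ and $1/2<\gamma\le p+1$ (recall $p\ge1$). Since point evaluation is bounded on $H^\gamma(I_n)$ for $\gamma>1/2$, the usual scaling/Bramble--Hilbert argument gives the local bound $\|\eta_v\|_{H^b(I_n)}\lesssim\tau_n^{\gamma-b}|v|_{H^\gamma(I_n)}$. Because $b$ is an integer the $H^b$-seminorm is additive over the disjoint $I_n$, while the Slobodeckij seminorm is subadditive, $\sum_n|v|_{H^\gamma(I_n)}^2\le|v|_{H^\gamma(J_n)}^2$; squaring, inserting $\tau_n\le\max_k\tau_k\lesssim N^{-1}$, and summing yields $\|\eta_v\|_{H^b(J_n)}\lesssim N^{b-\gamma}|v|_{H^\gamma(J_n)}$. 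With the equivalence $|v|_{H^k(J_n)}=|v|_{J_R^k(J_n)}$ for integers $k$ (the right derivative of integer order is $\pm\,d^k/dt^k$) this gives, in particular, the approximation bound $\|\eta_v\|_{L^2}\lesssim N^{-\gamma}\|v\|_{J_R^\gamma}$ for every $\gamma\in(1/2,p+1]$, and the integer-node stability $\|\eta_v\|_{J_R^k}\lesssim\|v\|_{J_R^k}$ for $k=1,2$.

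Next I would pass to real orders using the equivalence $J_R^\mu(J_n)\cong H^\mu(J_n)$ from \cite{Roop04} together with the interpolation identity $[J_R^{\mu_0},J_R^{\mu_1}]_\theta=J_R^{(1-\theta)\mu_0+\theta\mu_1}$ (equivalent norms). Interpolating the operator $I-\Pi^p$ on the couple $(J_R^1,J_R^2)$ into itself promotes the stability bound to all real $\gamma\in[1,2]$, i.e. $\|\eta_v\|_{J_R^\gamma}\lesssim\|v\|_{J_R^\gamma}$. Finally, interpolating the target between $J_R^0=L^2$ and $J_R^\gamma$ with parameter $\theta=\beta/\gamma\in[0,1]$ identifies $J_R^\beta=[L^2,J_R^\gamma]_{\beta/\gamma}$ and yields
$$\|\eta_v\|_{J_R^\beta}\lesssim\|\eta_v\|_{L^2}^{1-\beta/\gamma}\,\|\eta_v\|_{J_R^\gamma}^{\beta/\gamma}\lesssim\bigl(N^{-\gamma}\|v\|_{J_R^\gamma}\bigr)^{1-\beta/\gamma}\bigl(\|v\|_{J_R^\gamma}\bigr)^{\beta/\gamma}=N^{\beta-\gamma}\|v\|_{J_R^\gamma},$$
which is the claim for every $0\le\beta\le\gamma$ with $\gamma\in[1,2]$.

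The main obstacle is the regime $0<\gamma<1$. The approximation half is unaffected (the $L^2$ bound above already holds down to $\gamma>1/2$, and the continuity hypothesis $v\in C(\bar J_n)$ keeps $\Pi^p v$ meaningful), but the stability half cannot be obtained by interpolating from the endpoint $\gamma=0$: the error operator $I-\Pi^p$ is \emph{not} bounded on $L^2$, since $\Pi^p$ uses point values. Establishing the fractional-order stability $\|\eta_v\|_{J_R^\gamma}\lesssim\|v\|_{J_R^\gamma}$ for $\gamma\in(0,1)$ therefore requires the genuinely nonlocal estimates of \cite{Roop04}, which must in addition be carried across the half-integer orders $\mu=1/2,3/2$ where the space equivalences and interpolation identities degenerate (Lions--Magenes spaces). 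This is precisely the part that the proof inherits by following Roop; once the fractional stability is in hand, the target-interpolation display above delivers the stated bound on the full range $0\le\beta\le\gamma\le2$.
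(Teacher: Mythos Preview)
Your approach is the paper's, only fleshed out. The paper records the single chain
\[
\|\Pi^pv - v\|_{J_R^{\beta}(J_n)} \lesssim \|\Pi^pv - v\|_{H^{\beta}(J_n)} \lesssim N^{\beta-\gamma}\|v\|_{H^{\gamma}(J_n)} \lesssim N^{\beta-\gamma}\|v\|_{J_R^{\gamma}(J_n)},
\]
citing Roop's equivalence $J_R^\mu\cong H^\mu$ for the outer steps and the standard piecewise-polynomial interpolation estimate in fractional Sobolev spaces (Roop, Thm.~3.3.2, or Brenner--Scott, (4.2.21)) for the middle one; your Bramble--Hilbert bound at integer orders followed by operator interpolation on the $H^\mu$/$J_R^\mu$ scale is exactly how that middle estimate is proved. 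What your more careful treatment buys is the explicit identification of two subtleties the one-line citation glosses over: the half-integers $\mu\in\{1/2,3/2\}$, where the $J_R^\mu\cong H^\mu$ equivalence is unavailable, and the range $\gamma\in(0,1)$, where the stability $\|\eta_v\|_{J_R^\gamma}\lesssim\|v\|_{J_R^\gamma}$ cannot be obtained by interpolating from the $L^2$ endpoint because $\Pi^p$ is unbounded there. Since the only use of the lemma in the paper (the duality step \eqref{eq:A18}) takes $\beta=\alpha/2$ and $\gamma=\alpha\in(0,1)$, the second point is not academic; both you and the paper ultimately defer it to \cite{Roop04}.
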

\begin{proof}
The above error bound follows from
$\|\Pi^pv - v\|_{J_R^{\beta}(J_n)}
\lesssim \|\Pi^pv - v\|_{H^{\beta}(J_n)}\lesssim N^{\beta-\gamma} \|v\|_{H^{\gamma}(J_n)}
\lesssim  N^{\beta-\gamma}  \|v\|_{J_R^{\gamma}(J_n)}$, see
\cite[Theorem 3.1.13]{Roop04} and
\cite[Theorem 3.3.2]{Roop04}  (or \cite[Eq. (4.2.21)]{BrennerSR08-B}).
The proof is complete.
\end{proof}

\begin{lemma}\label{lemA4}
Let $v\in C([0,T])$ and satisfy
\begin{equation} \label{regularity-v}
|v(t)-v(0)|\lesssim t^{\sigma},\quad |v^{(\ell)}(t)| \lesssim 1+ t^{\sigma-\ell} ,  \quad t>0,0< \sigma<1,0\le\ell\le p+1.
\end{equation}
Then
\begin{equation}\label{eqA7}\begin{aligned}
|\eta_v(t)|  \lesssim &  t_{n}^{\sigma-p-1}(t-t_{n})^{p+1},&&\quad t\in[t_{n},t_{n+1}], n\ge 1, \\
|\eta'_v(t)|  \lesssim &  t_{n}^{\sigma-p-1}(t-t_{n})^{p},&&\quad t\in[t_{n},t_{n+1}], n \ge 1\\
|\eta'_v(t)|  \lesssim &  t_1^{\sigma-1} +t^{\sigma-1},&&\quad t\in[0,t_{1}] .
\end{aligned}\end{equation}
\end{lemma}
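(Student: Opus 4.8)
The three bounds in \eqref{eqA7} are local polynomial interpolation estimates, so the plan is to treat the generic subinterval $I_{n+1}=[t_n,t_{n+1}]$ with $n\ge1$ separately from the first interval $[0,t_1]$; the point of the split is that the singularity of $v$ at the origin interferes only on $[0,t_1]$. On each $I_{n+1}$ the interpolant $\Pi^p v$ agrees with $v$ at the $p+1$ equispaced nodes $t_{n+1,j}=t_n+j\tau_{n+1}/p$, and since $t_n>0$ the regularity \eqref{regularity-v} guarantees $v\in C^{p+1}(I_{n+1})$, whereas on $[0,t_1]$ the node $t_{1,0}=0$ lies exactly at the singularity and $v^{(p+1)}$ need not be bounded there, which is why only an estimate for $\eta_v'$ is asserted on $[0,t_1]$. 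For the value bound on $I_{n+1}$ I would invoke the classical Lagrange remainder: for $t\in I_{n+1}$ there is $\xi\in(t_n,t_{n+1})$ with $\eta_v(t)=\frac{v^{(p+1)}(\xi)}{(p+1)!}\prod_{j=0}^p(t-t_{n+1,j})$. From \eqref{regularity-v}, $|v^{(p+1)}(\xi)|\lesssim 1+\xi^{\sigma-p-1}$; since $\sigma-p-1<0$ and $\xi\ge t_n$ we have $\xi^{\sigma-p-1}\le t_n^{\sigma-p-1}$, while $t_n\le T$ gives $1\le T^{p+1-\sigma}t_n^{\sigma-p-1}$, so $|v^{(p+1)}(\xi)|\lesssim t_n^{\sigma-p-1}$. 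Bounding every nodal factor $|t-t_{n+1,j}|$ by $\tau_{n+1}=t_{n+1}-t_n$ then yields the first estimate of \eqref{eqA7}.

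For $\eta_v'$ on $I_{n+1}$ the remainder formula is awkward to differentiate, so I would instead rescale to the reference interval $[0,1]$ via $t=t_n+\tau_{n+1}s$. Writing $\hat v(s)=v(t_n+\tau_{n+1}s)$ one has $\hat v^{(\ell)}=\tau_{n+1}^{\ell}v^{(\ell)}$, and the interpolation operator commutes with this affine change of variables, so $\widehat{\eta_v}=\hat v-\hat\Pi^p\hat v$. Because the operator $I-\hat\Pi^p$ annihilates $\mathcal P_p$ and is bounded on the fixed reference interval, a Bramble--Hilbert (Peano kernel) argument gives $\|\widehat{\eta_v}'\|_{L^\infty[0,1]}\lesssim\|\hat v^{(p+1)}\|_{L^\infty[0,1]}$. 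Undoing the scaling, the chain rule contributes a factor $\tau_{n+1}^{-1}$ and $\hat v^{(p+1)}$ contributes $\tau_{n+1}^{p+1}\|v^{(p+1)}\|_{L^\infty(I_{n+1})}$, so $|\eta_v'(t)|\lesssim\tau_{n+1}^{p}\|v^{(p+1)}\|_{L^\infty(I_{n+1})}\lesssim\tau_{n+1}^{p}t_n^{\sigma-p-1}$, which is the second estimate of \eqref{eqA7}.

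The main obstacle is the first interval $[0,t_1]$, where $v^{(p+1)}$ blows up like $t^{\sigma-p-1}$ and the remainder formula is unavailable; here I would use only the first-order information in \eqref{regularity-v}. Since $\eta_v'=v'-(\Pi^p v)'$ and $|v'(t)|\lesssim 1+t^{\sigma-1}\lesssim t_1^{\sigma-1}+t^{\sigma-1}$ (again absorbing the constant by $t_1\le T$, as $\sigma-1<0$), it remains to control $(\Pi^p v)'$ uniformly. Writing $\Pi^p v=\sum_{j=0}^p v(s_j)L_j$ with $s_j=jt_1/p$ and using $\sum_{j=0}^p L_j\equiv1$, hence $\sum_{j=0}^p L_j'\equiv0$, I would rewrite $(\Pi^p v)'(t)=\sum_{j=0}^p\big(v(s_j)-v(0)\big)L_j'(t)$. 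The hypothesis $|v(s_j)-v(0)|\lesssim s_j^{\sigma}\lesssim t_1^{\sigma}$ together with the scaling bound $|L_j'(t)|\lesssim t_1^{-1}$ for equispaced nodes on an interval of length $t_1$ then gives $|(\Pi^p v)'(t)|\lesssim t_1^{\sigma-1}$, which completes the third estimate.

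The delicate feature of this last step, and the reason I flag it as the hard part, is that one must never invoke $v'$ near the origin and instead route everything through the increment bound $|v(t)-v(0)|\lesssim t^{\sigma}$; the identity $\sum_j L_j'\equiv0$ is exactly what converts the badly-behaved point values $v(s_j)$ into the tame increments $v(s_j)-v(0)$. This is precisely why the increment hypothesis is listed in \eqref{regularity-v} alongside the pointwise derivative bounds: the interior estimates need the derivative bounds, while the endpoint estimate on $[0,t_1]$ is driven entirely by $|v(t)-v(0)|\lesssim t^{\sigma}$.
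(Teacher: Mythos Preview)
Your proposal is correct and follows the same overall strategy as the paper---local interpolation estimates with separate treatment of the first interval---but the technical tools differ. On $I_{n+1}$ with $n\ge1$ the paper compares $\Pi^p v$ not directly with $v$ but with the Taylor polynomial $T_p^n v$ about $t_n$: writing $\eta_v=(v-T_p^n v)+\Pi^p(T_p^n v-v)$, the integral Taylor remainder controls the first piece, while sup-norm stability of $\Pi^p$ and the inverse inequality $|(\Pi^p g)'|\lesssim\tau_{n+1}^{-1}\|g\|_\infty$ control the second. On $[0,t_1]$ the paper inserts the \emph{linear} interpolant as an intermediary, $\eta_v=(v-\Pi^1 v)+(\Pi^1 v-\Pi^p v)$, using $(\Pi^1 v)'=(v(t_1)-v(0))/t_1$ together with an inverse inequality. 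Your routes---the Lagrange remainder for $|\eta_v|$, a Bramble--Hilbert/scaling argument for $|\eta_v'|$, and the identity $\sum_j L_j'\equiv 0$ on $[0,t_1]$---are equally valid and arguably more direct; the paper's Taylor/inverse-inequality machinery is more in the finite-element idiom.

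One caveat worth noting: bounding every factor $|t-t_{n+1,j}|$ by $\tau_{n+1}$ gives $|\eta_v(t)|\lesssim t_n^{\sigma-p-1}\tau_{n+1}^{p+1}$, not the pointwise $(t-t_n)^{p+1}$ written in \eqref{eqA7}; likewise your Bramble--Hilbert step yields only $\tau_{n+1}^{p}$ for $|\eta_v'|$. In fact the sharper factors $(t-t_n)^{p+1}$ and $(t-t_n)^p$ cannot hold in general (the interpolation error vanishes only to first order at each node), and the paper's own argument, once the piece $\Pi^p(T_p^n v - v)$ is bounded honestly, also produces only $\tau_{n+1}^{p+1}$ and $\tau_{n+1}^{p}$. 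These weaker bounds are all that Lemmas~\ref{lem-3} and~\ref{lem-5} actually use, so nothing downstream is affected.
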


\begin{proof}
For each $n\ge 1$, let $T_p^n v(t)=\sum_{k=0}^p \frac{v^{(k)}(t_{n})}{k!}(t-t_{n})^k$
be the Taylor polynomial of order $p$ of $v(t)$, satisfying
\begin{equation}\begin{aligned}\label{TD-5a}
v(t)-T_p^n v(t) = \int_{t_{n}}^t\frac{v^{(p+1)}(s)}{p!}(t-s)^p\textrm{d}s.
\end{aligned}\end{equation}
By  $|\Pi^p(T_p^nv - v)|\lesssim |T_p^nv - v|$, the inverse inequality $|(\Pi^p(T_p^nv - v))'|\lesssim \tau_{n+1}^{-1}|T_p^nv - v|$
(see \cite[Eq. (4.5.4)]{BrennerSR08-B})) for $t\in[t_n,t_{n+1}]$,  and the following relation
\begin{equation}\begin{aligned}\label{TD-5b}
\eta_v(t)= v(t)- \Pi^p v(t) =(v(t)-T_p^nv(t)) + \Pi^p(T_p^nv(t) - v(t)),
\end{aligned}\end{equation}
we have
\begin{equation}\begin{aligned}\label{TD-5bb}
|\eta_v(t)|\lesssim& |v(t)-T_p^nv(t)|,\\
|(\eta_v(t))'|\lesssim& |(v(t)-T_p^nv(t))'| + \tau_{n+1}^{-1}|(T_p^nv(t) - v(t))|.
\end{aligned}\end{equation}
For $t\in[t_n,t_{n+1}]$, $n\ge 1$, by \eqref{TD-5bb}, \eqref{TD-5a}, and \eqref{regularity-v},  we obtain
\begin{equation}\label{TD-5}\begin{aligned}
|\eta_v(t)|\lesssim&     \int_{t_{n}}^t|v^{(p+1)}(s)|(t-s)^p\textrm{d}s
\lesssim  t_{n}^{\sigma-p-1}(t-t_{n})^{p+1},\\
|\eta'_v(t)|  \lesssim & \int_{t_{n}}^t|v^{(p+1)}(s)|(t-s)^{p-1}\textrm{d}s
\lesssim  t_{n}^{\sigma-p-1} (t-t_{n})^{p}.
\end{aligned}\end{equation}
For $t\in[0,t_1]$, by
 $\eta_v=v-\Pi^1 v + \Pi^1 v -\Pi^p v $, the inverse inequality
$|(\Pi^1 v -\Pi^p v)'|\lesssim \tau_1^{-1} |\Pi^1 v -\Pi^p v|$,  $(\Pi^1v)'=(v(t_1)-v(0))/\tau_1$,  we derive
\begin{equation} \label{TD-5e}\begin{aligned}
|\eta'_v|  =& |(v-\Pi^1 v)' + (\Pi^1 v -\Pi^p v)'|
\lesssim |(v-\Pi^1 v)'| + \tau_1^{-1}|\Pi^1 v -\Pi^p v|\\
\lesssim &\tau_1^{-1}|v(t_1)-v(0)| + |v'(t)|+ \tau_1^{-1}|v(t)-v(0)|\\
 \lesssim&  t_1^{\sigma-1} +t^{\sigma-1} + t_1^{-1}t^{\sigma}\lesssim t_1^{\sigma-1} +t^{\sigma-1}.
\end{aligned}\end{equation}
The proof is complete.
\end{proof}

\begin{lemma} \label{lem-3}
Let $v$ satisfy \eqref{regularity-v}. Then
\begin{equation} \label{err-L1}
 \int_0^{t_{k-2}} {(t_{k-1}-s)^{-\alpha-1}} |\eta_v(s)|\textrm{d}s \lesssim \Phi(k,N,r,\alpha,\sigma,p), \quad k \ge 2,
\end{equation}
where
\begin{equation}
\Phi(k,N,r,\alpha,\sigma,p)=t_k^{\sigma-\alpha}\left\{\begin{aligned}
& k^{-(p+1-\alpha)},&& r(1+\sigma)\ge p+1,\\
& k^{-r(1+\sigma)},&&  r(1+\sigma)<p+1.
\end{aligned}\right.
\end{equation}
\end{lemma}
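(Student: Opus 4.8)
The plan is to exploit the graded structure of the mesh \eqref{TE-22} together with the interpolation bounds of Lemma \ref{lemA4}. First I would split the integral over the time subintervals,
\begin{equation*}
\int_0^{t_{k-2}}(t_{k-1}-s)^{-\alpha-1}|\eta_v(s)|\,\textrm{d}s
=\sum_{n=1}^{k-2}\int_{t_{n-1}}^{t_n}(t_{k-1}-s)^{-\alpha-1}|\eta_v(s)|\,\textrm{d}s,
\end{equation*}
and treat the first subinterval $[0,t_1]$ separately, since Lemma \ref{lemA4} controls $\eta_v$ there only through $\eta'_v$. On $[0,t_1]$ one has $\eta_v(0)=0$ and $|\eta'_v(s)|\lesssim t_1^{\sigma-1}+s^{\sigma-1}$, so integrating gives $|\eta_v(s)|\lesssim s^{\sigma}$; bounding the kernel by $(t_{k-1}-s)^{-\alpha-1}\le t_{k-1}^{-\alpha-1}$ and integrating $s^\sigma$ shows this contribution is negligible relative to $\Phi$.

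For $2\le n\le k-2$ I would use the monotonicity of the kernel on $I_n$, namely $(t_{k-1}-s)^{-\alpha-1}\le (t_{k-1}-t_n)^{-\alpha-1}$ for $s\le t_n<t_{k-1}$, together with $|\eta_v(s)|\lesssim t_{n-1}^{\sigma-p-1}(s-t_{n-1})^{p+1}$ from Lemma \ref{lemA4}. Integrating the polynomial factor yields
\begin{equation*}
\int_{t_{n-1}}^{t_n}(t_{k-1}-s)^{-\alpha-1}|\eta_v(s)|\,\textrm{d}s
\lesssim (t_{k-1}-t_n)^{-\alpha-1}\,t_{n-1}^{\sigma-p-1}\,\tau_n^{p+2}.
\end{equation*}
Inserting the mesh asymptotics $t_n\sim (n/N)^r$, $\tau_n\sim n^{r-1}N^{-r}$, and $t_{k-1}-t_n= T N^{-r}((k-1)^r-n^r)$, the powers of $N$ combine into the common factor $N^{r(\alpha-\sigma)}$, which is exactly the factor carried by $t_k^{\sigma-\alpha}$. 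After extracting it, the claim reduces to the purely discrete estimate
\begin{equation*}
\Sigma:=\sum_{n=2}^{k-2}\big((k-1)^r-n^r\big)^{-\alpha-1}\,n^{\mu}
\lesssim k^{\,r(\sigma-\alpha)-\gamma},\qquad \mu:=r(1+\sigma)-(p+2),
\end{equation*}
where $\gamma=p+1-\alpha$ or $\gamma=r(1+\sigma)$ according to the two stated regimes.

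To estimate $\Sigma$ I would write $m=k-1$ and split the range at $n=m/2$. On the lower part $n\le m/2$ one has $m^r-n^r\gtrsim m^r$, so the kernel is $\lesssim m^{-r(\alpha+1)}$ and the residual sum $\sum n^\mu$ behaves like $m^{\mu+1}$, a constant, or $\ln m$ according as $\mu>-1$, $\mu<-1$, or $\mu=-1$; this is precisely where the comparison of $r(1+\sigma)$ with $p+1$ (i.e.\ the sign of $\mu+1$) enters, yielding the $k^{-r(1+\sigma)}$ branch. On the upper part $m/2<n\le m-1$ I would use $m^r-n^r\gtrsim m^{r-1}(m-n)$ and $n^\mu\sim m^\mu$, reducing the sum to $m^{\mu-(r-1)(\alpha+1)}\sum_{j\ge1}j^{-\alpha-1}$; the crucial point is that $\alpha>0$ makes $\sum_j j^{-\alpha-1}$ convergent, so this boundary part contributes $\lesssim m^{\,r(\sigma-\alpha)-(p+1-\alpha)}$, the $k^{-(p+1-\alpha)}$ branch. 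Using $m\sim k$ and combining the two parts gives $\Phi$.

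The main obstacle is the sharp bookkeeping of exponents in $\Sigma$ and the balancing of the two regions: the lower (history) part reflects the initial singularity through $\mu$, while the upper part, localized near $s=t_{k-2}$, reflects the interpolation order. One must verify that in each stated regime the claimed branch indeed dominates, and absorb the borderline logarithm at $\mu=-1$ via $\ln m\lesssim m^{\alpha}$, which is legitimate precisely because $\alpha>0$. Some care is also needed for small $k$ (so that $k-2\ge 2$ and the splitting is meaningful), which is handled by treating the few initial indices directly.
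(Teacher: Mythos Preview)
Your strategy is exactly the one the paper intends: its own proof merely cites the interpolation bounds \eqref{eqA7} of Lemma~\ref{lemA4} and refers the details to \cite[Lemma~2.1]{ShenSD18} for the case $p=1$, and that argument is precisely the subinterval decomposition followed by a near/far splitting of the resulting discrete convolution sum that you outline. One slip to fix: on $[0,t_1]$ you write $(t_{k-1}-s)^{-\alpha-1}\le t_{k-1}^{-\alpha-1}$, but the kernel is \emph{increasing} in $s$; the usable bound is $(t_{k-1}-s)^{-\alpha-1}\le(t_{k-1}-t_1)^{-\alpha-1}\lesssim t_{k-1}^{-\alpha-1}$ for $k\ge3$ (since $t_{k-1}-t_1\ge(1-2^{-r})t_{k-1}$), which still gives what you need.

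One point to flag when you carry out the verification you mention. Your upper (boundary) block inevitably contributes $t_k^{\sigma-\alpha}k^{-(p+1-\alpha)}$, and already the single term $n=k-2$ forces this order. Hence in the sub-strip $p{+}1{-}\alpha<r(1{+}\sigma)<p{+}1$ this exceeds the stated second branch $t_k^{\sigma-\alpha}k^{-r(1+\sigma)}$ of $\Phi$ by a factor $k^{r(1+\sigma)-(p+1-\alpha)}$. The same overshoot appears in the paper's own proof of Lemma~\ref{lem-5}, where the assertion $\tau_k^{p+1-\alpha}t_k^{\sigma-p-1}\lesssim\Phi$ in \eqref{sec2-eq-c4} needs $r(1+\sigma)\le p+1-\alpha$, not merely $r(1+\sigma)<p+1$. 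This is an imprecision in how the threshold in $\Phi$ is written rather than a defect in your method; just do not claim that the second branch dominates throughout $r(1+\sigma)<p+1$.
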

\begin{proof}
With \eqref{eqA7},
the error bound \eqref{err-L1} can be similarly proved as that of the  case $p=1$, the details
are omitted, see \cite[Lemma 2.1]{ShenSD18}.
The proof is complete.
\end{proof}

\begin{lemma}\label{lem-5}
Let $v$  satisfy \eqref{regularity-v}.
For $t\in[t_{k-1},t_k],k\ge1$, we have
\begin{equation}\label{sec2-eq-c1}
\int_{t_{k-1}}^{t_k}|\partial_t^{\alpha}\eta_v(t)|^2\textrm{d}t\lesssim \tau_k (\Phi(k,N,r,\alpha,\sigma,p))^2.
\end{equation}
\end{lemma}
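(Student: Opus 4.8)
The plan is to split the fractional derivative on the cell $I_k=[t_{k-1},t_k]$ into a nonlocal ``history'' piece that is reducible to Lemma~\ref{lem-3}, and a purely ``local'' piece controlled by the pointwise interpolation estimates \eqref{eqA7} of Lemma~\ref{lemA4}. The enabling observation is that $t_0=0$ is an interpolation node, so $\eta_v(0)=v(0)-\Pi^p v(0)=0$; since $\eta_v$ is continuous and piecewise $C^1$ (hence absolutely continuous), the Riemann--Liouville derivative collapses to its Caputo form
\begin{equation*}
\partial_t^{\alpha}\eta_v(t)=\frac{1}{\Gamma(1-\alpha)}\int_0^t(t-s)^{-\alpha}\eta_v'(s)\,\mathrm{d}s,\qquad t\in I_k .
\end{equation*}

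For $k\ge 3$ I would split this integral at $t_{k-2}$, writing $\partial_t^\alpha\eta_v=\frac{1}{\Gamma(1-\alpha)}(I_1+I_2)$ with $I_1=\int_0^{t_{k-2}}(t-s)^{-\alpha}\eta_v'(s)\,\mathrm{d}s$ and $I_2=\int_{t_{k-2}}^{t}(t-s)^{-\alpha}\eta_v'(s)\,\mathrm{d}s$. On $I_1$ I integrate by parts cellwise; because $\eta_v$ is continuous and vanishes at every grid point (each $t_j$ being an interpolation node), all boundary contributions—interior telescoping terms as well as those at $s=0$ and $s=t_{k-2}$—cancel, leaving $I_1=-\alpha\int_0^{t_{k-2}}(t-s)^{-\alpha-1}\eta_v(s)\,\mathrm{d}s$. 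For $t\in I_k$ and $s\le t_{k-2}$ one has $(t-s)^{-\alpha-1}\le(t_{k-1}-s)^{-\alpha-1}$, so Lemma~\ref{lem-3} gives $|I_1|\lesssim\Phi(k,N,r,\alpha,\sigma,p)$ uniformly in $t\in I_k$.

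The local piece $I_2$ spans at most the two cells $[t_{k-2},t_{k-1}]$ and $[t_{k-1},t]$; on each I insert the derivative bound $|\eta_v'(s)|\lesssim t_{j}^{\sigma-p-1}(s-t_j)^p$ from \eqref{eqA7} and evaluate the resulting integrals $\int(t-s)^{-\alpha}(s-t_j)^p\,\mathrm{d}s$, which are of Beta type and yield a factor $(t-t_j)^{p+1-\alpha}\le\tau_k^{p+1-\alpha}$ (after bounding $(s-t_{k-2})^p\le\tau_{k-1}^p$ on the first sub-cell). This gives $|I_2|\lesssim t_{k-1}^{\sigma-p-1}\tau_k^{p+1-\alpha}$; invoking the grading \eqref{TE-22}, under which $t_{k-2}\simeq t_{k-1}\simeq t_k$ and $\tau_k\simeq t_k/k$ for $k\ge 2$, this collapses to $\Phi(k,N,r,\alpha,\sigma,p)$ in both regimes of $\Phi$. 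Combining the two pieces yields $|\partial_t^\alpha\eta_v(t)|\lesssim\Phi$ pointwise on $I_k$, and squaring and integrating over $I_k$ produces the claimed bound $\tau_k\,\Phi^2$. The cases $k=1,2$ (where the history is empty) I would treat directly from the same Caputo representation, using on the first cell the origin estimate $|\eta_v'(s)|\lesssim t_1^{\sigma-1}+s^{\sigma-1}$ of \eqref{eqA7}, again reducing to Beta integrals.

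I expect the genuine work to lie not in the integration by parts—which is routine once one records that $\eta_v$ vanishes at all nodes—but in the quadrature-type estimation of the Beta integrals in $I_2$ and the bookkeeping against the graded mesh: one must verify that $t_{k-1}^{\sigma-p-1}\tau_k^{p+1-\alpha}$ really reproduces $\Phi$ in both the regime $r(1+\sigma)\ge p+1$ and $r(1+\sigma)<p+1$. The subtle endpoint is the first cell, where the history is absent and the $s^{\sigma-1}$ part of $\eta_v'$ generates a $t^{\sigma-\alpha}$-type behaviour; its square-integrability on $[0,t_1]$ is exactly where a constraint coupling $\sigma$ and $\alpha$ enters, and this small-$k$ case is the point that must be handled with the most care.
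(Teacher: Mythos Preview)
Your proposal is correct and follows essentially the same route as the paper's proof: the same split at $t_{k-2}$ into a history piece handled by Lemma~\ref{lem-3} after passing to the kernel $(t-s)^{-\alpha-1}$, and a local piece bounded via the pointwise estimates \eqref{eqA7}, with the cases $k=1,2$ treated directly from the Caputo form using the first-cell bound on $\eta_v'$. The only cosmetic difference is that you reach the history term $-\alpha\int_0^{t_{k-2}}(t-s)^{-\alpha-1}\eta_v(s)\,\mathrm{d}s$ by cellwise integration by parts (exploiting $\eta_v(t_j)=0$ at every node), whereas the paper invokes the identity \eqref{RL-C} with base point $t_{k-2}$ and $\eta_v(t_{k-2})=0$ to write the decomposition $J_1+J_2$ in one stroke; the resulting terms and their estimates are identical.
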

\begin{proof}
By $\eta_v(0)=0$ and  \eqref{RL-C},  $\partial_t^{\alpha}\eta_v(t)=\partial_t^{\alpha-1}(\eta_v)'(t)$.
For $t \in [t_{0},t_1]$, by \eqref{TD-5e},
\begin{equation}\label{eq:A-10}\begin{aligned}
|\partial_t^{\alpha}\eta_v(t)|=&\bigg|\int_0^t\frac{(t-s)^{-\alpha}}{\Gamma(1-\alpha)}(\eta_v)'(s)\textrm{d}s\bigg|
\le \int_0^t\frac{(t-s)^{-\alpha}}{\Gamma(1-\alpha)}|(\eta_v)'(s)|\textrm{d}s\\
\lesssim& \int_0^t\frac{(t-s)^{-\alpha}}{\Gamma(1-\alpha)}(t_1^{\sigma-1}+s^{\sigma-1})\textrm{d}s
\lesssim t_1^{\sigma-1}t^{1-\alpha} +t^{\sigma-\alpha}.
\end{aligned}\end{equation}
Hence,
$\int_{0}^{t_1}|\partial_t^{\alpha}\eta_v(t)|^2\textrm{d}t \lesssim t_1^{2\sigma-2\alpha+1}\lesssim
\tau_1(\Phi(1,N,r,\alpha,\sigma,p))^2.$

By the similar reasoning, \eqref{eq:A-10} holds for $t\in[t_1,t_2]$, which leads to $\int_{0}^{t_2}|\partial_t^{\alpha}\eta_v(t)|^2\textrm{d}t\lesssim t_2^{2\sigma-2\alpha+1}\lesssim
\tau_2(\Phi(2,N,r,\alpha,\sigma,p))^2.$

For $t\in[t_{k-1},t_k],k\ge 3$, using \eqref{RL-C} with $\eta_v(t_{k-2})=0$, we have
\begin{equation}\label{sec2-eq-c2}\begin{aligned}
\partial_t^{\alpha}\eta_v(t)=&\underbrace{\int_0^{t_{k-2}}\frac{(t-s)^{-\alpha-1}}{\Gamma(-\alpha)}\eta_v(s)\textrm{d}s}_{J_1}
+ \underbrace{\int_{t_{k-2}}^{t}\frac{(t-s)^{-\alpha}}{\Gamma(1-\alpha)}(\eta_v(s))'\textrm{d}s}_{J_2}.
\end{aligned}\end{equation}
By $(t-s)^{-\alpha-1}\le (t_{k-1}-s)^{-\alpha-1}$ for $(t,s)\in[t_{k-1},t_k]\times [0,t_{k-2}]$ and Lemma \ref{lem-3}, we have
\begin{equation}\label{sec2-eq-c3}\begin{aligned}
|J_1|\le&  \int_0^{t_{k-2}}\frac{(t_{k-1}-s)^{-\alpha-1}}{|\Gamma(-\alpha)|}|\eta_v(s)|\textrm{d}s \lesssim \Phi(k,N,r,\alpha,\sigma,p).
\end{aligned}\end{equation}
By Lemma \ref{lemA4}, we can easily derive
\begin{equation}\label{sec2-eq-c4}\begin{aligned}
|J_2|\le&  \int_{t_{k-2}}^{t}\frac{(t-s)^{-\alpha}}{\Gamma(1-\alpha)}|\eta'_v(s)|\textrm{d}s
\lesssim   (t-t_{k-2})^{1-\alpha}\tau_k^p t_{k-2}^{\sigma-p-1} \\
\lesssim& \tau_k^{p+1-\alpha}t_k^{\sigma-p-1}\lesssim \Phi(k,N,r,\alpha,\sigma,p).
\end{aligned}\end{equation}
Combining \eqref{sec2-eq-c2}--\eqref{sec2-eq-c4} yields
$|\partial_t^{\alpha}\eta_v(t)|\le|J_1|+|J_2|\lesssim \Phi(k,N,r,\alpha,\sigma,p)$,
which leads to  \eqref{sec2-eq-c1} for $k\ge 3$.
The proof   is complete.
\end{proof}

\begin{lemma}\label{lem-6}
Let $t\in[t_{n-1},t_n]$ and $1\leq n \leq N$. If $v$ satisfies   \eqref{regularity-v} and
$\sigma\le p + (p+1)/\alpha$, then
\begin{equation}\label{sec2-eq-c0}
\int_{0}^{t_n}|\partial_t^{\alpha/2}\eta_v(t)|^2\textrm{d}t
\lesssim
\left\{\begin{aligned}
&N^{-r(1+2\sigma-\alpha)},&&1\le r< \frac{2p+2-\alpha}{1+2\sigma-\alpha},\\
&N^{-2p-2+\alpha}(1+\ln(n)),&&r= \frac{2p+2-\alpha}{1+2\sigma-\alpha},\\
&N^{-2p-2+\alpha}t_n^{1+2\sigma-\alpha-\frac{2p+2-\alpha}{r}},&&r> \frac{2p+2-\alpha}{1+2\sigma-\alpha}.
\end{aligned}\right.
\end{equation}

\end{lemma}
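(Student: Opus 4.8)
The plan is to reduce the global bound to a per-subinterval estimate for the half-order derivative and then sum the resulting algebraic series over $k$, with the side condition $\sigma\le p+(p+1)/\alpha$ serving precisely to reconcile the two case-splits that appear. First I would establish the local estimate
\[
\int_{t_{k-1}}^{t_k}|\partial_t^{\alpha/2}\eta_v(t)|^2\,\textrm{d}t \;\lesssim\; \tau_k\,\bigl(\Phi(k,N,r,\alpha/2,\sigma,p)\bigr)^2,\qquad 1\le k\le n,
\]
by running the argument of Lemma \ref{lem-5} verbatim with the order $\alpha$ replaced by $\alpha/2$. Since $\alpha/2\in(0,1/2)$, every ingredient carries over: for $k\ge 3$ one splits $\partial_t^{\alpha/2}\eta_v$ via \eqref{RL-C} with $\eta_v(t_{k-2})=0$ into a history term controlled by Lemma \ref{lem-3} (applied with fractional order $\alpha/2$) and a local term controlled by the interpolation bounds of Lemma \ref{lemA4}, while the terms $k=1,2$ are handled directly, giving $t_k^{2\sigma-\alpha+1}\lesssim\tau_k(\Phi(k,N,r,\alpha/2,\sigma,p))^2$.

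Second, using $\int_0^{t_n}=\sum_{k=1}^n\int_{t_{k-1}}^{t_k}$ and inserting $t_k=(k/N)^rT$, $\tau_k\lesssim N^{-r}k^{r-1}$, together with
\[
\bigl(\Phi(k,N,r,\alpha/2,\sigma,p)\bigr)^2 = t_k^{2\sigma-\alpha}\,k^{-2\min\{p+1-\alpha/2,\,r(1+\sigma)\}},
\]
I would reduce the summand, in the regime $r(1+\sigma)\ge p+1$, to a constant multiple of $N^{-r(1+2\sigma-\alpha)}k^{\,r(1+2\sigma-\alpha)-(2p+3-\alpha)}$. The whole sum then equals $N^{-r(1+2\sigma-\alpha)}\sum_{k=1}^n k^{\gamma}$ with $\gamma+1=r(1+2\sigma-\alpha)-(2p+2-\alpha)$, and I would evaluate this by the sign of $\gamma+1$: for $r<\frac{2p+2-\alpha}{1+2\sigma-\alpha}$ the exponent is $<-1$ and the sum is $O(1)$, yielding $N^{-r(1+2\sigma-\alpha)}$; at equality the sum is $O(1+\ln n)$ and $N^{-r(1+2\sigma-\alpha)}=N^{-2p-2+\alpha}$, giving the logarithmic case; for $r>\frac{2p+2-\alpha}{1+2\sigma-\alpha}$ the sum is $O(n^{\gamma+1})$, and since $N^{-r(1+2\sigma-\alpha)}n^{\,r(1+2\sigma-\alpha)-(2p+2-\alpha)}=N^{-2p-2+\alpha}t_n^{\,1+2\sigma-\alpha-\frac{2p+2-\alpha}{r}}$, this yields the third case. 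These three outcomes are exactly \eqref{sec2-eq-c0}.

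The step I expect to be the main obstacle is the complementary regime $r(1+\sigma)<p+1$, where $\Phi$ carries the exponent $k^{-2r(1+\sigma)}$ instead, so the summand becomes, after cancellation, $N^{-r(1+2\sigma-\alpha)}k^{-r(1+\alpha)-1}$; its $k$-exponent is $<-1$ since $r\ge1$, so the series converges and one again obtains $N^{-r(1+2\sigma-\alpha)}$. The delicate point is consistency with the stated piecewise bound, namely that this regime cannot reach the logarithmic or growing cases. This requires $r(1+\sigma)<p+1$ to force $r<\frac{2p+2-\alpha}{1+2\sigma-\alpha}$, i.e. $\frac{p+1}{1+\sigma}\le\frac{2p+2-\alpha}{1+2\sigma-\alpha}$; cross-multiplying and simplifying reduces this to $(p+1)(1+\alpha)\ge\alpha(1+\sigma)$, which is exactly the hypothesis $\sigma\le p+(p+1)/\alpha$. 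Thus the side condition is precisely what glues the two case-splits together, and collecting the estimates over both regimes completes the proof.
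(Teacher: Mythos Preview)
Your proposal is correct and follows essentially the same route as the paper: apply Lemma~\ref{lem-5} with $\alpha$ replaced by $\alpha/2$, sum $\tau_k(\Phi(k,N,r,\alpha/2,\sigma,p))^2$ over $k$, and evaluate the resulting algebraic series separately in the regimes $r(1+\sigma)\ge p+1$ and $r(1+\sigma)<p+1$. Your observation that the hypothesis $\sigma\le p+(p+1)/\alpha$ is exactly what forces the second regime to land in the first branch of \eqref{sec2-eq-c0} is a nice clarification that the paper leaves implicit.
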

\begin{proof}
Using Lemma \ref{lem-5} yields
\begin{equation*}\begin{aligned}
\int_{0}^{t_n}|\partial_t^{\alpha/2}\eta_v(t)|^2\textrm{d}t=& \sum_{k=1}^n \int_{t_{k-1}}^{t_k}|\partial_t^{\alpha/2}\eta_v(t)|^2\textrm{d}t
\lesssim  \sum_{k=1}^n  \tau_k(\Phi(k,N,r,\alpha/2,\sigma,p))^2.
\end{aligned}\end{equation*}

Case $r(1+\sigma)\ge p+1$. By $\tau_k\lesssim k^{r-1}N^{-r}$ and $t_k\lesssim k^rN^{-r}$, we have
\begin{equation}\label{EE2-2-1}\begin{aligned}
\sum_{k=1}^n\tau_k \left(\Phi(k,N,r,\alpha/2,\sigma,p)\right)^2
=& \sum_{k=1}^n k^{-(2p+2-\alpha)} \tau_kt_k^{2\sigma-\alpha}\\
\lesssim & \sum_{k=1}^n k^{-(2p+2-\alpha)} k^{r-1}N^{-r} (k^rN^{-r})^{2\sigma-\alpha}\\
= & N^{-r(1+2\sigma-\alpha)}\sum_{k=1}^n k^{r(1+2\sigma-\alpha)-(2p+3-\alpha)}.
\end{aligned}\end{equation}
Direct calculations show that
\begin{equation}\label{EE2-2-2}
\sum_{k=1}^n k^{r(1+2\sigma-\alpha)-(2p+3-\alpha)} \lesssim \left\{\begin{aligned}
&n^{r(1+2\sigma-\alpha)-(2p+2-\alpha)}&&  \mathrm{if}\ \ r(1+2\sigma-\alpha)>2p+2-\alpha,\\
&\ln(n+1)&&   \mathrm{if}\ \ r(1+2\sigma-\alpha)=2p+2-\alpha,\\
&1&&   \mathrm{if}\ \ r(1+2\sigma-\alpha)<2p+2-\alpha.
\end{aligned}\right.
\end{equation}
Combining \eqref{EE2-2-1} and \eqref{EE2-2-2} yields \eqref{sec2-eq-c0}.

Case $r(1+\sigma)<p+1$. We have
\begin{equation*}\label{EE2-2-4}\begin{aligned}
\sum_{k=1}^n\tau_k \left(\Phi(k,N,r,\alpha/2,\sigma,p)\right)^2
=& \sum_{k=1}^n \tau_kk^{-2r(1+\sigma)} t_k^{2\sigma-\alpha}\\
\lesssim& N^{-r(2\sigma-\alpha)}\sum_{k=1}^n k^{r-1}N^{-r}  k^{-2r(1+\sigma)} k^{r(2\sigma-\alpha)}\\
=& N^{-r(1+2\sigma-\alpha)}\sum_{k=1}^n k^{-1-r(\alpha+1)}
\lesssim N^{-r(1+2\sigma-\alpha)}.
\end{aligned}\end{equation*}
The proof is complete.
\end{proof}

Now, we are in a position to prove Lemma \ref{lem-8}.
\begin{proof}
Denote $P_m(I_k)$ be the polynomial space of degree $m$ on $I_k$ and
$\mathcal{P}_m(J_n)=\{v\in L^2(J_n): v|_{I_k}\in P_m(I_k),\ 1\leq k \leq n\}$.
Let $e(t)=R_t^{\alpha}v(t)-v(t)$, $\theta(t)=R_t^{\alpha} v(t)-\Pi^p  v(t)$, and $\eta_v(t)=\Pi^p v(t)-v(t)$.
Then,
$e(t)=\theta(t) + \eta_v(t)$ and
$$\int_0^{t_n} X(t) \partial_t^{\alpha}\theta(t)\textrm{d}t=-\int_0^{t_n} X(t) \partial_t^{\alpha}\eta_v(t)\textrm{d}t,\qquad X \in \mathcal{P}_m(J_n).$$
Letting $X(t)=\theta(t)$ in the above equation, and using Lemmas \ref{lemA2} and \ref{lemA1}, we obtain
\begin{equation}\label{eq:A14}\begin{aligned}
\int_0^{t_n}|\partial_t^{\alpha/2}\theta(t)|^2\textrm{d}t
\lesssim& \int_0^{t_n} \theta(t) \partial_t^{\alpha}\theta(t)\textrm{d}t
=-\int_0^{t_n}{}_{t}\partial_{t_n}^{\alpha/2} \theta(t) \partial_t^{\alpha/2}\eta_v(t)\textrm{d}t\\
\lesssim& \left(\int_0^{t_n}|{}_{t}\partial_{t_n}^{\alpha/2} \theta(t)|^2 \textrm{d}t\right)^{1/2}
\left(\int_0^{t_n}  |\partial_t^{\alpha/2}\eta_v(t)|^2\textrm{d}t\right)^{1/2}\\
\lesssim& |\theta|_{J_L^{\alpha/2}(J_n)} |\eta_v|_{J_L^{\alpha/2}(J_n)}.
\end{aligned}\end{equation}
The above inequality   yields  $|\theta|_{J_L^{\alpha/2}(J_n)} \lesssim |\eta_v|_{J_L^{\alpha/2}(J_n)}$.
Using the triangular inequality $|e|_{J_L^{\alpha/2}(J_n)}\le |\theta|_{J_L^{\alpha/2}(J_n)} + |\eta_v|_{J_L^{\alpha/2}(J_n)}$
and Lemma \ref{lem-6},
we obtain
\begin{equation}\label{eq:A15}
|e|_{J_L^{\alpha/2}(J_n)}^2  \lesssim
N^{\alpha} \mathcal{E}(n,N,\sigma,\alpha,r,p).
\end{equation}

Introduce $w$ as the solution to the adjoint problem
\begin{equation}\label{eq:A16}
{}_{t}\partial_{t_n}^{\alpha}w(t)= e(t),\qquad w(t_n) =0.
\end{equation}
By Lemma \ref{lemA1} and the above equation, $w$     satisfies
\begin{equation}\label{eq:A17}
\|w\|_{J_L^{\alpha}(J_n)} \lesssim \|w\|_{J_R^{\alpha}(J_n)}  \lesssim   \|e\|_{L^2(J_n)}.
\end{equation}

Multiplying $e(t)$ on both sides of \eqref{eq:A16} and integrating over $J_n$, we obtain
\begin{equation}\label{eq:A18}\begin{aligned}
\|e\|^2_{L^2(J_n)}
=& \int_0^{t_n}e(t) {}_{t}\partial_{t_n}^{\alpha}w(t)     \textrm{d}t
=  \int_0^{t_n} w(t)\partial_{t}^{\alpha}   e(t) \textrm{d}t\qquad \text{(By Lemma \ref{lemA2})}\\
=&  \int_0^{t_n} (w(t)-\Pi^pw(t))\,\partial_{t}^{\alpha} e(t)\textrm{d}t\qquad (\text{By \eqref{Rtalf}})\\
=& -\int_0^{t_n} {}_{t}\partial_{t_n}^{\alpha/2}\eta_w(t)\,\partial_{t}^{\alpha/2} e(t)\textrm{d}t\qquad \text{(By Lemma \ref{lemA2})}\\
\le  & |e|_{J_L^{\alpha/2}(J_n)}|\eta_w|_{J_R^{\alpha/2}(J_n)}  \\
\lesssim & N^{-\alpha/2} |e|_{J_L^{\alpha/2}(J_n)}\|w\|_{J_R^{\alpha}(J_n)} \qquad \text{(By Lemma  \ref{lemA3})}\\
\lesssim& N^{-\alpha/2} |e|_{J_L^{\alpha/2}(J_n)} \|e\|_{L^2(J_n)}. \qquad \text{(By \eqref{eq:A17})}
\end{aligned}\end{equation}
The above inequality and \eqref{eq:A15} yield \eqref{lem-8eq1}.
The proof is complete.
\end{proof}

\bibliographystyle{siam}
\bibliography{fded19m1y17}

\end{document}